\newtheorem{theorem}{Theorem}[section]
\newtheorem{lemma}[theorem]{Lemma}
\newtheorem{corollary}[theorem]{Corollary}
\theoremstyle{definition}
\newtheorem{definition}[theorem]{Definition}
\newtheorem{example}[theorem]{Example}
\theoremstyle{remark}
\newtheorem{remark}[theorem]{Remark}
\definecolor{WildStrawberry}{RGB}{242, 105, 142}
\newcommand{\htpyeq}{\simeq}
\newcommand{\op}{\textnormal{op}} 
\colorlet{darkgreen}{green!40!black}
\ifpdf \usepackage[pdftex]{hyperref}
\else \usepackage[ps2pdf]{hyperref} 
\begin{document}

\title{Short, new proofs of Dowker duality} 


\author{Iris H.R. Yoon}             

\email{hyoon@wesleyan.edu}

\address{Department of Mathematics and Computer Science,
         Wesleyan University,
        45 Wyllys Ave, 
         Middletown, CT 06459     
         USA \\
Department of Mathematics and statistics, Swarthmore College, 500 College Ave, PA 19081 USA}

\keywords{Dowker duality, poset fiber lemmas, relational complexes, Nerve Lemma.}

\begin{abstract}
This paper presents three short, new proofs of Dowker duality using various poset fiber lemmas. We introduce modifications of joins and products of simplicial complexes called \emph{relational join} and \emph{relational product} complexes. These relational complexes can be constructed whenever there is a relation between the face posets of simplicial complexes, which includes the context of Dowker duality and covers of simplicial complexes. In this more general setting, we show that the homologies of the simplicial complexes and the relational complexes fit together in a long exact sequence. 
\end{abstract}

\maketitle

\section{Introduction}
Given sets $A$ and $X$ and a relation $R \subseteq A \times X$, we say that $a \in A$ is \emph{related} to $x \in X$ if and only if $(a, x) \in R$. A Dowker complex with base $A$, denoted $D_A$, is an abstract simplicial complex whose simplices are non-empty finite subsets of $A$ that are related to a common element in $X$. One can similarly construct $D_X$, a Dowker complex with base $X$. The geometric realizations of $D_A$ and $D_X$, which are built on two distinct vertex sets, are homotopy equivalent. We refer to this result as \emph{Dowker duality}.

In its original statement, Dowker duality took a weaker form: $H_n(D_A) \cong H_n(D_X)$ for all $n \in \mathbb{Z}_+$  \cite{dowkerHomologyGroupsRelations1952}. 
The stronger version of Dowker duality, i.e., $\|D_A \|\htpyeq \|D_X\|$, was established in \cite{bjornerTopologicalMethods1996} using the Nerve Lemma. Dowker duality has been extended to the functorial setting \cite{chowdhuryFunctorialDowkerTheorem2018} and bifiltrations \cite{hellmer_density_2024}. In recent years, new proofs of Dowker duality have also emerged \cite{robinsonCosheafRepresentationsRelations2022, brunRectangleComplexRelation2022, brun_dowker_2024}. 
Example applications of Dowker complexes include computational neuroscience \cite{vaupel_topological_2023}, cancer biology \cite{yoon_deciphering_2024, stolz_relational_2023}, molecular biology \cite{liu_dowker_2022}, networks \cite{chowdhuryFunctorialDowkerTheorem2018}, PDF parsers \cite{ewing_metric_2021}, and persistence diagrams \cite{yoon_persistent_2023}.

This paper presents three short, new proofs of Dowker duality. The first proof uses Galois connections. For the second and third proofs, we introduce two new complexes called the \emph{relational join complex}, denoted $D_A \star_{\tilde{R}} D_X$, and the \emph{relational product complex}, denoted $D_A \times_{\tilde{R}} D_X$, that are homotopy equivalent to the Dowker complexes (see following diagram, right). The proofs are concise thanks to established poset fiber lemmas applied to the face posets of the relevant complexes (see following diagram, left). 
\[
\begin{tikzcd}[column sep = small]
\quad & \tilde{R} \arrow[dl] \arrow[dr] & \quad \\
P_A \arrow{dr}  \arrow[rr,yshift=-3, swap]  & \, & P_X^{\op} \arrow{dl}  \arrow[ll,  yshift = 3, swap] \\
\, & P_A \star_{R} P_X^{\op} & \, 
\end{tikzcd} 
\hspace{4em}
\begin{tikzcd}[column sep = small]
\quad &   D_A  \times_{\tilde{R}} D_X   \arrow[dl, swap, "\htpyeq"] \arrow{dr}{\htpyeq} & \quad \\ 
\|D_A\|  \arrow{dr}[swap]{\htpyeq}   \arrow[rr, "\htpyeq", yshift=-3, swap]  & \, &   \|D_X\|   \arrow{dl}{\htpyeq}  \arrow[ll, "\htpyeq", yshift = 3, swap] \\
\, &   \|D_A \star_{\tilde{R}} D_X\|   & \, 
\end{tikzcd} 
\]
The construction of the relational complexes is not limited to Dowker complexes. One can build the relational complexes whenever there is a relation $\tilde{R}$ between the face posets of simplicial complexes $K$ and $M$. The paper presents new proofs of the Nerve Lemma using relational complexes. In the general setting of relations between face posets of simplicial complexes, we show that the relational join complex $\|K \star_{\tilde{R}} M\|$ is homeomorphic to a double mapping cylinder involving the relational product complex $K \times_{\tilde{R}}M$ (Theorem~\ref{thm:double_mapping_cylinder}). It then follows that the homologies of $K, M$, the relational join complex $K\star_{\tilde{R}}M$, and the relational product complex $K \times_{\tilde{R}} M$ fit into a long exact sequence (Corollary \ref{cor:LES}) 
\[  \cdots \to H_n(K \times_{\tilde{R}} M) \to H_n(K) \oplus H_n(M) \to H_n(K\star_{\tilde{R}}M) \to H_{n-1}(K \times_{\tilde{R}} M) \to \cdots \; .\]

\medskip

The paper is organized as follows. Section~\ref{sec:preliminaries} contains preliminaries. Section~\ref{sec:short_new_proofs} presents three short new proofs of Dowker duality and introduces the relational complexes. In Section~\ref{section:SpectralSequences}, we establish the double mapping cylinder and the long exact sequence among the relational complexes.

\section{Preliminaries}
\label{sec:preliminaries}

\subsection{Dowker duality}

\begin{definition}
Let $A$ and $X$ be sets. Let $R \subseteq A \times X$ be a relation. The Dowker complex with base $A$, denoted $D_A$, is the abstract simplicial complex whose simplices $\sigma_A$ are non-empty finite collections of elements in $A$ that are related to a common element in $X$. That is, there exists some $x \in X$ such that $\sigma_A \times \{x \} \subseteq R$. 
\end{definition}

One can similarly define $D_X$, the Dowker complex with base $X$. 

Given $a \in A$ and $x \in X$, we indicate that $a$ and $x$ are related by writing $(a, x) \in R$ or $aRx$. When $A$ and $X$ are finite, the relation $R$ is often represented by a binary matrix whose rows and columns correspond to $A$ and $X$. The matrix has entry $1$ if the corresponding elements of $A$ and $X$ are related, and 0 otherwise. 

\begin{theorem}
\label{thm:Dowker}
(Dowker duality \cite{dowkerHomologyGroupsRelations1952, bjornerTopologicalMethods1996})
Let $R \subseteq A \times X$ be a relation between sets. Then $\|D_A\| \htpyeq \|D_X\|$.
\end{theorem}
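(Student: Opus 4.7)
The plan is to reduce Dowker duality to an adjunction (Galois connection) between the face posets $P_A$ and $P_X^{\op}$, and then to invoke the standard fact that adjoint order-preserving maps between posets induce homotopy equivalences on their order complexes. This is the approach suggested by the left-hand diagram in the introduction, with $R^*$ encoding the graph of the Galois connection.

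Concretely, I would first define $\phi \from P_A \to P_X^{\op}$ by $\phi(\sigma) = \{x \in X : aRx \text{ for all } a \in \sigma\}$, and dually $\psi \from P_X^{\op} \to P_A$ by $\psi(\tau) = \{a \in A : aRx \text{ for all } x \in \tau\}$. Well-definedness is immediate: since $\sigma \in D_A$ there exists some $x \in X$ related to every element of $\sigma$, so $\phi(\sigma)$ is nonempty, and any $a \in \sigma$ then witnesses that $\phi(\sigma)$ is a simplex of $D_X$. Both maps reverse inclusion, which is precisely what makes them order-preserving into the opposite-ordered targets. The adjunction $\phi \dashv \psi$ then follows from the manifestly symmetric reformulation $\sigma \subseteq \psi(\tau) \iff \tau \subseteq \phi(\sigma) \iff \sigma \times \tau \subseteq R$.

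From this adjunction I would extract the unit $\sigma \subseteq \psi(\phi(\sigma))$ and counit $\tau \subseteq \phi(\psi(\tau))$; each is a comparison of functors on posets-as-categories and hence induces a simplicial homotopy on nerves. Consequently the maps induced by $\phi$ and $\psi$ are inverse homotopy equivalences between the order complex of $P_A$ and that of $P_X^{\op}$. Since reversing a poset does not change its order complex and since the order complex of the face poset of a simplicial complex $K$ is the barycentric subdivision of $K$, this yields the desired $D_A \htpyeq D_X$.

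The step I expect to require the most care is conceptual rather than computational: the natural maps induced by $R$ reverse inclusion, so both the adjunction and any subsequent appeal to a poset fiber lemma only become visible after one commits to viewing one side as $P_X^{\op}$. Once that choice is made, the three verifications (well-definedness, adjunction, unit/counit) are each a one-line unfolding of the defining property of $R$.
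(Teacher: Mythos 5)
Your proposal is correct and coincides with the paper's first proof (Theorem~\ref{thm:dowker_Galois}): the same antitone Galois connection $L \from P_A \leftrightarrows P_X^{\op} \from U$ on face posets, the same unit/counit inequalities resolved by the order homotopy lemma, and the same identification of the order complexes with barycentric subdivisions.
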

Here, we use $\|K\|$ to denote the geometric realization of a simplicial complex $K$. Dowker duality has also been generalized to the following functorial version \cite{chowdhuryFunctorialDowkerTheorem2018}. 

\begin{theorem}(Functorial Dowker duality \cite{virk_rips_2021, brunRectangleComplexRelation2022})
Let $A, A', X, X'$ be sets with relations $R \subseteq A \times X$ and $R' \subseteq A' \times X'$.  Let $f: A \to A'$ and $g: X \to X'$ be set maps such that $(f(a), g(x)) \in R'$ for all $(a, x) \in R$. 
Let $D_f: D_A \to D_{A'}$ and $D_g: D_X \to D_{X'}$ be the simplicial maps induced by the maps $f$ and $g$ on the Dowker complexes. There exist homotopy equivalences $\Gamma: \|D_A\| \to \|D_X\|$ and $\Gamma': \|D_{A'}\| \to \|D_{X'}\|$ such that the following diagram commutes up to homotopy.
\[ 
\begin{tikzcd}
\|D_A\| \arrow[r, "\|D_f\|"] \arrow[d, "\Gamma"] & \|D_{A'}\| \arrow[d, "\Gamma'"]\\
\|D_{X}\| \arrow[r, "\|D_g\|"] & \|D_{X'}\|
\end{tikzcd}
\]
\end{theorem}

Note that the above theorem is stronger than the Functorial Dowker Theorem from {\cite{chowdhuryFunctorialDowkerTheorem2018}}. There, $R$ and $R'$ must be nested relations of a fixed $A' \times X'$.

Dowker duality is closely related to a fundamental result in topology called the Nerve Lemma. Given a simplicial complex $K$, let $\mathcal{U}$ denote a cover of $K$ by subcomplexes. Let $N\mathcal{U}$ denote the nerve of the cover. A cover $\mathcal{U}$ is called a \emph{good cover} if every non-empty finite intersection of its elements is contractible. 

\begin{lemma}(Nerve Lemma)
Let $K$ be a simplicial complex, and let $\mathcal{U}$ be a good cover of $K$ by subcomplexes. Then, $K$ and $N\mathcal{U}$ are homotopy equivalent.   
\end{lemma}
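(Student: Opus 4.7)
The plan is to reduce the nerve lemma to the same poset-fiber template the paper uses for Dowker duality, by building a relational complex between $K$ and $N\mathcal{U}$. The relevant relation is $\sigma \, \tilde{R} \, U$ iff the simplex $\sigma$ of $K$ is contained in the cover element $U \in \mathcal{U}$. The fact that $\mathcal{U}$ covers $K$ guarantees that every $\sigma$ is related to at least one $U$, and the fact that each $U$ is a subcomplex guarantees the relation is downward closed in $\sigma$. I expect the argument to parallel the Dowker proofs from Section~\ref{sec:short_new_proofs}, with $N\mathcal{U}$ playing the role of one of the Dowker complexes.

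Concretely, I would form an auxiliary poset $E$ whose elements are pairs $(\sigma, \tau)$ with $\sigma \in P_K$ and $\tau$ a simplex of $N\mathcal{U}$ all of whose vertices $U \in \tau$ satisfy $\sigma \subseteq U$; this is essentially the face poset of the relational product $K \times_{\tilde{R}} N\mathcal{U}$. It admits two projections $\pi_K \from E \to P_K$ and $\pi_N \from E \to P_{N\mathcal{U}}$. Applying a poset fiber lemma (Quillen's Theorem A, or the version invoked earlier in the paper), I would show the fiber of $\pi_K$ over $\sigma$ is the face poset of the full simplex on the vertex set $\mathcal{U}(\sigma) := \{U \in \mathcal{U} : \sigma \subseteq U\}$, which is nonempty and contractible, while the fiber of $\pi_N$ over $\tau = \{U_0, \ldots, U_n\}$ is the face poset of the subcomplex $U_0 \cap \cdots \cap U_n$, contractible by the good cover hypothesis. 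From this one concludes $|K| \htpyeq |E| \htpyeq |N\mathcal{U}|$.

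The main obstacle is orienting the two face-poset factors so that the fibers of both projections can be handled at once: enlarging $\sigma$ shrinks $\mathcal{U}(\sigma)$, so $\sigma$ and $\tau$ pull the inclusion order in opposite directions. The standard remedy is to work in $P_K \times P_{N\mathcal{U}}^{\op}$, in parallel with the diagram for Dowker duality displayed in the introduction, and to invoke the poset fiber lemma in the form appropriate for each projection (using over-fibers on one side and under-fibers on the other). A secondary subtlety is translating the good cover hypothesis from its usual topological form (contractibility of geometric realizations of intersections) into contractibility of the corresponding face subposet, which is routine but should be recorded explicitly once to justify the second fiber computation.
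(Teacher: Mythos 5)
Your proposal is essentially the paper's own proof of the nerve lemma via relational products (Section~\ref{section:nerve_lemma_pf_via_relational_complexes}): your auxiliary poset $E$ is exactly the covering relation $R^* \subseteq P_K \times P_{N\mathcal{U}}$, and your two fiber computations --- the full simplex on $\mathcal{U}(\sigma)$ over $\sigma \in P_K$, and the (barycentric subdivision of the) intersection $U_0 \cap \cdots \cap U_n$ over $\tau \in P_{N\mathcal{U}}$, contractible by the good-cover hypothesis --- are precisely the ones the paper records before invoking Quillen's product fiber lemma (Lemma~\ref{lemma:QuillenProductFiber}). The only cosmetic difference is that your worry about passing to $P_K \times P_{N\mathcal{U}}^{\op}$ is moot in this version: $R^*$ is downward closed in the plain product order, and the product fiber lemma tests the slice fibers $Z_\tau$ and $Z_\sigma$ rather than over/under fibers, so no order reversal is needed (that twist is only required for the companion proof via the relational join).
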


See \cite{bauer_unified_2023} for details and \cite{bjornerTopologicalMethods1996} for a proof of Dowker duality using the Nerve Lemma.

\subsection{Posets, Galois connections, and fiber lemmas}
The order complex of a poset allows us to study the poset via abstract simplicial complexes.

\begin{definition}
Given a poset $P$, the order complex $\Delta P$ is the abstract simplicial complex whose $k$-simplices are the $k$-chains $p_0 < p_1 < \cdots < p_k$ in $P$. 
\end{definition}

The geometric realization $\Vert \Delta P \Vert$ associates a topological space to a poset $P$. Furthermore, if $P^{\op}$ is the dual poset (obtained from $P$ by reversing all orders), then $\Delta P$ and $\Delta P^{\op}$ are identical abstract simplicial complexes. 

Given a simplicial complex $K$, we use $P_K$ to denote its face poset, i.e., the set of simplices of $K$ ordered by inclusion. Note that $\Delta P_K$ is the barycentric subdivision of $K$. 

We say that a poset $P$ is contractible if its order complex $\Delta P$ is contractible. If $P$ is a poset and $p \in P$, let 
\begin{align*}
P_{\geq p} &= \{ p' \in P \, | \, p' \geq p \} \\
P_{\leq p} &= \{ p' \in P \, | \, p' \leq p \}.
\end{align*}

All posets of the form $P_{\geq p}$ and $P_{\leq p}$ are contractible since $\Vert \Delta P_{\geq p} \Vert$ and $\Vert \Delta P_{\leq p} \Vert$ are cones with $p$ as their respective apex.

The Carrier Lemma is one of the widely used tools for showing that two maps are homotopic. We first define a carrier. Given simplices $\sigma$ and $\tau$, we use $\sigma \subseteq \tau$ to indicate that $\sigma$ is a face of $\tau$. 

\begin{definition}
\label{def:carrier}
Let $K$ be a simplicial complex and $T$ a topological space.
Let $C$ be an order-preserving assignment of subsets of $T$ to simplices of $K$. That is, for every simplex $\sigma$ of $K$, $C(\sigma) \subseteq T$ such that $C(\sigma) \subseteq C(\tau)$ for all simplices $\sigma \subseteq \tau$ in $K$. A map $f : \Vert K \Vert \to T$ is \emph{carried by} $C$ if $f(\Vert \sigma \Vert) \subseteq C(\sigma)$ for all $\sigma \in K$. 
\end{definition}

\begin{lemma}[Carrier Lemma \cite{bjornerTopologicalMethods1996}] 
\label{lemma:carrierlemma}
If $f, g: \Vert K \Vert \to T$ are carried by $C$ and $C(\sigma)$ is contractible for every simplex $\sigma \in K$, then $f$ and $g$ are homotopic. 
\end{lemma}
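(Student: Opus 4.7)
The plan is to build the homotopy $H \from \Vert K \Vert \times [0,1] \to T$ between $f$ and $g$ by induction on the skeleta of $K$, maintaining at each stage the invariant that $H(\Vert \sigma \Vert \times [0,1]) \subseteq C(\sigma)$ for every simplex $\sigma$ already processed. This invariant mirrors the defining property of a carried map and is what allows the induction to proceed over each new simplex.

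For the base case, let $v$ be a vertex of $K$. Since $f$ and $g$ are carried by $C$, both $f(v)$ and $g(v)$ lie in $C(\{v\})$, which is contractible, hence path-connected. Choose a path in $C(\{v\})$ joining $f(v)$ to $g(v)$ and use it to define $H$ on $\{v\} \times [0,1]$. This ensures the invariant on the $0$-skeleton.

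For the inductive step, suppose $H$ has been defined on $\Vert K^{(n-1)} \Vert \times [0,1]$ compatibly with $f$ and $g$ and satisfying the containment condition. Fix an $n$-simplex $\sigma$, and consider the prism $\Vert \sigma \Vert \times [0,1]$, whose topological boundary decomposes as $(\Vert \sigma \Vert \times \{0,1\}) \cup (\partial \Vert \sigma \Vert \times [0,1])$. On the top and bottom faces, the prescribed values are $f$ and $g$, which map into $C(\sigma)$ by hypothesis; on the lateral boundary, the previously defined $H$ sends each $\Vert \tau \Vert \times [0,1]$ (for $\tau \subsetneq \sigma$) into $C(\tau)$, and $C(\tau) \subseteq C(\sigma)$ by the order-preserving property of the carrier. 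Thus the restriction of the already-defined map to $\partial(\Vert \sigma \Vert \times [0,1])$ lands in $C(\sigma)$. Since $\Vert \sigma \Vert \times [0,1]$ is an $(n+1)$-ball with boundary sphere $S^n$, and $C(\sigma)$ is contractible and therefore has trivial homotopy groups, this boundary map extends to a continuous map of the whole prism into $C(\sigma)$; use any such extension as $H$ on $\Vert \sigma \Vert \times [0,1]$. Carrying out the extension over all $n$-simplices simultaneously gives $H$ on $\Vert K^{(n)} \Vert \times [0,1]$, and the process is compatible on overlaps because the definitions agree on the $(n-1)$-skeleton.

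The only real obstacle is the extension step across each new simplex, and this is precisely where the contractibility hypothesis on $C(\sigma)$ is used. When $K$ is infinite-dimensional, continuity of the resulting $H$ follows from the weak topology on $\Vert K \Vert$ together with the fact that $H$ is continuous on each closed cell $\Vert \sigma \Vert \times [0,1]$; no extra argument beyond the skeletal induction is required.
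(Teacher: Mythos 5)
Your proof is correct. The paper itself gives no argument for this lemma (it simply cites Walker), and your skeleton-by-skeleton induction with the invariant $H(\Vert \sigma \Vert \times [0,1]) \subseteq C(\sigma)$ --- using contractibility of $C(\sigma)$ to extend the boundary map of each prism --- is exactly the standard proof found in the cited reference, with the key points (order-preservation of $C$ giving the containment on lateral faces, null-homotopy of the boundary sphere map, and continuity via the weak topology) all handled properly.
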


\begin{proof}
See \cite{walkerHomotopyTypeEuler1981} for proof. 
\end{proof}

If $f: P \to Q$ is an order-preserving or order-reversing map of posets, one can consider $f$ as a simplicial map $f: \Delta P \to \Delta Q$, and $f$ induces a continuous map $\|f\|: \Vert \Delta P \Vert \to \Vert \Delta Q \Vert$. The following lemma, which one can prove using the Carrier Lemma, will be useful. See \cite{bjornerTopologicalMethods1996} (Theorem 10.11) for proof. 

\begin{lemma}
[Order Homotopy Lemma \cite{bjornerTopologicalMethods1996, quillenHomotopyPropertiesPoset1978}]
\label{lemma:order_htpy_thm}
Let $f, g: P \to Q$ be order-preserving maps (resp. order-reversing maps) such that $f(p) \leq g(p)$ for every $p$. The induced maps $\|f\|, \|g\| : \Vert \Delta P \Vert \to \Vert \Delta Q \Vert$ are homotopic.
\end{lemma}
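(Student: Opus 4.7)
The plan is to apply the Carrier lemma to a carefully chosen carrier that contains the images of both $|f|$ and $|g|$. First I treat the order-preserving case. For each simplex $\sigma = (p_0 < p_1 < \cdots < p_k)$ of $\Delta P$, write $p_k$ for its maximum element and define
\[ C(\sigma) := \Vert \Delta Q_{\leq g(p_k)} \Vert. \]
Each $C(\sigma)$ is contractible because $\Delta Q_{\leq g(p_k)}$ is a cone with apex $g(p_k)$. Monotonicity of $C$ follows because an inclusion $\sigma \subseteq \tau$ with respective maxima $p_k$ and $q_m$ forces $p_k \leq q_m$ in $P$, and then $g(p_k) \leq g(q_m)$ in $Q$, so $C(\sigma) \subseteq C(\tau)$.

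Next I would verify that both $|f|$ and $|g|$ are carried by $C$. The image $|f|(\Vert \sigma \Vert)$ is the geometric realization of the (possibly degenerate) chain $f(p_0) \leq \cdots \leq f(p_k)$ in $Q$, whose vertices satisfy $f(p_i) \leq g(p_i) \leq g(p_k)$ and therefore lie in $Q_{\leq g(p_k)}$. Similarly, the vertices $g(p_0) \leq \cdots \leq g(p_k)$ of $|g|(\Vert \sigma \Vert)$ are all bounded above by $g(p_k)$. The Carrier lemma then yields $|f| \htpyeq |g|$.

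For the order-reversing case, the same strategy works with the dual carrier
\[ C(\sigma) := \Vert \Delta Q_{\geq f(p_k)} \Vert, \]
which is a cone with apex $f(p_k)$, hence contractible. Monotonicity holds because $\sigma \subseteq \tau$ with maxima $p_k \leq q_m$ gives $f(q_m) \leq f(p_k)$ by order-reversal, so $Q_{\geq f(p_k)} \subseteq Q_{\geq f(q_m)}$. The vertices of $|f|(\Vert \sigma \Vert)$ form the chain $f(p_0) \geq \cdots \geq f(p_k)$ and are all $\geq f(p_k)$, while the vertices of $|g|(\Vert \sigma \Vert)$ satisfy $g(p_i) \geq f(p_i) \geq f(p_k)$ by the hypothesis $f \leq g$. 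One more invocation of the Carrier lemma finishes the proof.

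The only real subtlety is choosing the correct endpoint of the chain and the correct direction, upset versus downset, so that the carrier is a cone containing both images simultaneously; the hypothesis $f(p) \leq g(p)$ is exactly what forces this choice. Once the carrier is fixed, the verifications of contractibility, monotonicity, and the carrier condition are routine, and the Carrier lemma does all the topological work.
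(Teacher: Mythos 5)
Your proof is correct and follows exactly the route the paper points to: it cites Björner's Theorem 10.11 and notes the statement is proved via the carrier lemma, and your carrier $C(\sigma) = \Vert \Delta Q_{\leq g(\max\sigma)}\Vert$ (resp.\ $\Vert \Delta Q_{\geq f(\max\sigma)}\Vert$) is the standard choice in that argument. The verifications of contractibility, monotonicity, and the carrier condition are all sound, so nothing further is needed.
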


Throughout this paper, we will use the following lemma to prove the functorial Dowker duality. To avoid notational clutter, we will use $k_0 k_1 \dots  k_l$ to denote the simplex $\{ k_0, k_1, \dots, k_l\}$ of $K$. Note that given a simplicial map $g: K \to K'$ between simplicial complexes, there is an induced simplicial map on the order complexes $\tilde{g}: \Delta P_K \to \Delta P_{K'}$. Explicitly, $\tilde{g}$ maps vertex $ k_0 k_1, \dots, k_l$ of $\Delta P_K$ to $g(k_0 k_1 \dots k_l )$ of $\Delta P_{K'}.$

\begin{lemma}
\label{lemma:functorial_subdivision}

Let $g: K \to K'$ be a simplicial map of simplicial complexes, and let $\tilde{g}: \Delta P_K \to \Delta P_{K'}$ be the induced map on the order complex of face posets. Let $\psi: \| \Delta P_K \| \to \| K \|$ and $\psi': \| \Delta P_{K'} \| \to \|K'\|$ be the usual homeomorphisms between realizations of simplicial complexes and their barycentric subdivisions. Then, the following diagram commutes up to homotopy. 
\[ 
\begin{tikzcd}
\| \Delta P_K \| \arrow[r, "\psi"] \arrow[d, "\| \tilde{g}\|"] & \| K \| \arrow[d, " \| g\|"] \\
\| \Delta P_{K'} \| \arrow[r, "\psi'"] & \| K' \| \\
\end{tikzcd} 
\]
\end{lemma}
Note that the diagram does not necessarily commute because the homeomorphism between the geometric realizations of a simplicial complex and its barycentric subdivision, while canonical, is not natural \cite{Fritsch_Piccinini_1990}. 
\begin{proof}

We use the Carrier Lemma. Given a simplex $\sigma = (k_0 < k_0k_1 < \dots < k_0 k_1 \dots k_l)$ of $\Delta P_K$, let
\[
  C(\sigma) \;=\; \bigl\|\,g(k_0k_1\ldots k_l)\,\bigr\| \;\subseteq\; \|K'\|.
\]
Note $C$ is an order-preserving assignment and $C(\sigma)$ is contractible for all $\sigma \in \Delta P_K$. 

We now show that $C$ carries both $\| g \| \circ \psi$ and $\psi' \circ \| \tilde{g}\|$. Let $\sigma \in \Delta P_K$ be the simplex $k_0 < k_0k_1 < \dots < k_0 k_1 \dots k_l$. For each vertex $k_0 k_1 \dots k_s$ of $\sigma$, the homeomorphism $\psi$ maps $\|k_0 k_1 \dots k_s\|$ of $\|\sigma\|$ to the corresponding barycenter of simplex  $\| k_0 k_1 \dots k_s  \|$ in $\|K\|$ and extends linearly. Every such barycenter lies in $\|k_0k_1\ldots k_l\|$, so
$\psi(\|\sigma\|) \subseteq \|k_0k_1\ldots k_l\|$. Applying $\|g\|$,
\[
  \|g\| \circ \psi(\|\sigma\|)
  \;\subseteq\; \bigl\|\,g(k_0k_1\ldots k_l)\,\bigr\|
  \;=\; C(\sigma).
\]
So $\|g \| \circ \psi$ is carried by $C$.

On the other hand, 
\begin{align*}
\|\tilde{g} \| (\| \sigma \|) &= \| \tilde{g}(k_0 < k_0k_1 < \dots < k_0 \, k_1 \dots k_l ) \| \\
&= \| g(k_0) \leq g(k_0 k_1) \leq \dots \leq g( k_0 \dots k_l ) \|
\end{align*}
is the geometric realization of the simplex in $\Delta P_{K'}$ that have $g(k_0)$, $g(k_0 k_1)$, $\dots,$ $g(k_0 k_1 \dots k_l)$ as the vertex set. Under $\psi'$, each vertex maps to the corresponding barycenter in $\|K'\|$, and all of these barycenters lie in $\|g(k_0k_1\ldots k_l)\|$. Hence, 
\[\psi' \circ \| \tilde{g} \|(\|\sigma\|) \subseteq \| g( k_0 k_1 \dots k_l ) \| = C(\sigma).\] 
So $\psi' \circ \|\tilde{g} \|$ is also carried by $C$. 

By the Carrier Lemma (Lemma~\ref{lemma:carrierlemma}), the diagram commutes up to homotopy. 
\end{proof}

We now introduce Galois connections, which we use for the first proof of Dowker duality. 
\begin{definition} 
An (isotone) Galois connection between two posets $P$ and $Q$ is a pair of order-preserving maps $L: P \to Q$ and $U: Q \to P$ such that 
\[ L(p) \leq q \quad \iff \quad p \leq U(q) \quad \forall p \in P, \, q \in Q.\]
\end{definition}
An equivalent condition is 
\begin{align*}
p  \leq U \circ L (p) & \quad \forall p \in P, \, \text{and} \\
L \circ U (q) \leq q & \quad \forall q \in Q. 
\end{align*}

We denote a Galois connection by $(L, U)$ or $L: P \leftrightarrows Q: U$, and we call $L$ the \emph{lower adjoint} of $U$ and $U$ the \emph{upper adjoint} of $L$. While it is common to denote Galois connections as $(L,R)$, we reserve $R$ to denote relations.

Given a Galois connection between $P$ and $Q$, the order complexes $\Delta P$ and $\Delta Q$ are homotopy equivalent. A weaker version of the lemma recently appeared in \cite{patelObiusHomology2023}, and it is mentioned in \cite{walkerHomotopyTypeEuler1981} that the statement can be proved using the Carrier Lemma. Here is a short proof.

\begin{lemma}
\label{lemma:Galois_htpyeq}
Let $L: P \leftrightarrows Q: U$ be a Galois connection between posets $P$ and $Q$. Then $\Vert \Delta P \Vert \htpyeq \Vert \Delta Q \Vert$.
\end{lemma}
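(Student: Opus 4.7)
The plan is to exploit the defining inequalities of the Galois connection and apply the order homotopy lemma (Lemma~\ref{lemma:order_htpy_thm}) directly. Since $L$ and $U$ are order-preserving, they induce continuous maps $|L| \from \Vert \Delta P \Vert \to \Vert \Delta Q \Vert$ and $|U| \from \Vert \Delta Q \Vert \to \Vert \Delta P \Vert$ on the geometric realizations of the order complexes. The strategy is to show that these two induced maps are homotopy inverses of each other.

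First I would translate the Galois connection axiom into the two pointwise inequalities $p \leq U \circ L(p)$ for every $p \in P$ and $L \circ U(q) \leq q$ for every $q \in Q$, which follow by plugging $q = L(p)$ (respectively $p = U(q)$) into the biconditional $L(p) \leq q \iff p \leq U(q)$. Both compositions $U \circ L \from P \to P$ and $L \circ U \from Q \to Q$ are order-preserving, and they are comparable in the poset of order-preserving self-maps to the respective identities $\identity_P$ and $\identity_Q$.

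Next I would invoke Lemma~\ref{lemma:order_htpy_thm} with $f = \identity_P$ and $g = U \circ L$ (both order-preserving, with $f(p) \leq g(p)$), obtaining $|U \circ L| \htpyeq |\identity_P|$ as maps $\Vert \Delta P \Vert \to \Vert \Delta P \Vert$. Since the geometric realization is functorial for order-preserving maps, $|U \circ L| = |U| \circ |L|$ and $|\identity_P| = \identity_{\Vert \Delta P \Vert}$, so $|U| \circ |L| \htpyeq \identity_{\Vert \Delta P \Vert}$. The symmetric application to $\identity_Q$ and $L \circ U$ (now using the opposite inequality $L \circ U(q) \leq q$, with the roles of $f$ and $g$ swapped) gives $|L| \circ |U| \htpyeq \identity_{\Vert \Delta Q \Vert}$. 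Together these exhibit $|L|$ and $|U|$ as mutually inverse homotopy equivalences, establishing $\Vert \Delta P \Vert \htpyeq \Vert \Delta Q \Vert$.

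There is essentially no obstacle in this argument; the only point demanding a moment of care is verifying that Lemma~\ref{lemma:order_htpy_thm} applies to the composites $U \circ L$ and $L \circ U$ with the correct direction of the inequality, and that the functoriality $|U \circ L| = |U| \circ |L|$ is being used (which is immediate for order-preserving maps, since order complex is a functor from posets to simplicial complexes). Everything else is a direct unpacking of the Galois connection definition.
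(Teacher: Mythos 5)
Your proof is correct and follows essentially the same route as the paper: both extract the unit/counit inequalities $p \leq U \circ L(p)$ and $L \circ U(q) \leq q$ from the Galois connection and then apply the order homotopy lemma (Lemma~\ref{lemma:order_htpy_thm}) to conclude that $|U|\circ|L|$ and $|L|\circ|U|$ are homotopic to the respective identities. Your additional remarks on functoriality of the realization are correct but just spell out details the paper leaves implicit.
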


\begin{proof}
Since $(L, U)$ is a Galois connection, $p \leq U \circ L(p)$ for all $p \in P$ and $L \circ U(q) \leq q$ for all $q \in Q$. Let $\mathbbm{1}_P$ and $\mathbbm{1}_Q$ be the identity morphisms on $P$ and $Q$. By the order homotopy lemma (Lemma \ref{lemma:order_htpy_thm}), $\|U\|\circ \|L\|$ is homotopic to $\|\mathbbm{1}_P\|$ and $\|L\| \circ \|U\|$ is homotopic to $\| \mathbbm{1}_Q \|$.
\end{proof}

One can also prove the previous lemma using  Quillen's Poset Fiber Lemma, which will appear throughout the paper.

\begin{lemma} 
[Quillen's Poset Fiber Lemma \cite{quillenHomotopyPropertiesPoset1978}]
\label{lemma:Quillen_fiber} 
Let $f: P \to Q$ be an order-preserving morphism of posets. If $f^{-1}_{\leq q} = \{ p \in P \, | \, f(p) \leq q \}$ is contractible for all $q \in Q$ (resp. $f^{-1}_{\geq q} = \{ p \in P \, | \, f(p) \geq q\}$ is contractible for all $q \in Q$), then $f$ induces a homotopy equivalence $\Vert f \Vert: \Vert \Delta P \Vert \to \Vert \Delta Q \Vert$.
\end{lemma}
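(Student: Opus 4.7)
The plan is to prove Quillen's fiber lemma using the carrier lemma, which is already available from the preliminaries. I will treat the case in which $f^{-1}_{\leq q}$ is contractible for every $q \in Q$; the dual case follows by applying this case to $f^{\op}\from P^{\op} \to Q^{\op}$ together with the identification $\Delta P = \Delta P^{\op}$.

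The first step is to construct a candidate homotopy inverse $g\from \Vert \Delta Q \Vert \to \Vert \Delta P \Vert$. Define a carrier $C_Q$ on simplices of $\Delta Q$ by $C_Q(q_0 < q_1 < \cdots < q_k) = \Vert \Delta f^{-1}_{\leq q_k} \Vert$. Because $f^{-1}_{\leq q} \subseteq f^{-1}_{\leq q'}$ whenever $q \leq q'$, the assignment $C_Q$ is order-preserving in the sense of \Cref{def:carrier}, and each $C_Q(\sigma)$ is contractible by hypothesis. Using an inductive, simplex-by-simplex extension (starting by sending each vertex $q$ to any point of $\Vert \Delta f^{-1}_{\leq q} \Vert$, which is nonempty and contractible, and extending over higher simplices using the contractibility of the carrier values), one produces a continuous map $g$ carried by $C_Q$. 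This is the standard existence statement underlying the carrier lemma.

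Next I would check the two composites.  For $\Vert f \Vert \circ g \htpyeq \mathbbm{1}_{\Vert \Delta Q \Vert}$, I would use the carrier $C(q_0 < \cdots < q_k) = \Vert \Delta Q_{\leq q_k} \Vert$, which is contractible since $Q_{\leq q_k}$ has a top element. The identity on $\Vert \Delta Q \Vert$ is obviously carried by $C$, and $\Vert f \Vert \circ g$ is carried by $C$ because $g$ sends the simplex $q_0 < \cdots < q_k$ into $\Vert \Delta f^{-1}_{\leq q_k} \Vert$, whose image under $\Vert f \Vert$ lies in $\Vert \Delta Q_{\leq q_k} \Vert$. The carrier lemma then gives the desired homotopy. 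For $g \circ \Vert f \Vert \htpyeq \mathbbm{1}_{\Vert \Delta P \Vert}$, I would use the carrier $C'(p_0 < \cdots < p_k) = \Vert \Delta f^{-1}_{\leq f(p_k)} \Vert$, which is contractible by hypothesis. The chain $p_0 < \cdots < p_k$ lies in $f^{-1}_{\leq f(p_k)}$ (since $f$ is order-preserving), so the identity is carried by $C'$; and $g \circ \Vert f \Vert$ is carried by $C'$ because $g$, applied to $f(p_0) \leq \cdots \leq f(p_k)$, lands in $\Vert \Delta f^{-1}_{\leq f(p_k)} \Vert$. Another application of the carrier lemma concludes.

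The main obstacle is the existence of $g$ itself: the carrier lemma as stated only guarantees that two maps carried by the same contractible-valued carrier are homotopic, not that such a map exists. I would either cite the standard extension result for maps into contractible carriers (this is the version of the carrier lemma used in \cite{walkerHomotopyTypeEuler1981, bjornerTopologicalMethods1996}) or sketch the inductive construction: extend over the $0$-skeleton of $\Delta Q$ using nonemptiness of each $C_Q(q)$, and extend across each simplex of $\Delta Q$ using the fact that a map from the boundary of a simplex into a contractible space extends across the simplex. Everything else in the argument is a mechanical verification that the chosen carriers are order-preserving and contain the relevant images.
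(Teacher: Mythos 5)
Your argument is correct, and it is worth noting that the paper itself offers no proof of Lemma~\ref{lemma:Quillen_fiber} --- it is simply cited from \cite{quillenHomotopyPropertiesPoset1978} --- so there is no in-paper argument to compare against. What you have written is the standard elementary proof of the poset fiber lemma via carriers (essentially the one in \cite{bjornerTopologicalMethods1996} and \cite{walkerHomotopyTypeEuler1981}), and all three carrier verifications check out: $C_Q$, $C$, and $C'$ are order-preserving because the maximal element of a face of a chain is bounded by the maximal element of the chain, each carrier value is contractible either by hypothesis or because $Q_{\leq q_k}$ is a cone, and the containments of images you assert all hold (in particular $\Vert f \Vert$ sends $\Vert \Delta f^{-1}_{\leq q_k}\Vert$ into $\Vert \Delta Q_{\leq q_k}\Vert$ even when the induced simplicial map degenerates chains). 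The reduction of the $f^{-1}_{\geq q}$ case to the $f^{-1}_{\leq q}$ case via $P^{\op}$, $Q^{\op}$ and $\Delta P = \Delta P^{\op}$ is also fine.

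The one point you correctly single out is the only real one: the carrier lemma as stated in the paper gives uniqueness up to homotopy but not existence of a map carried by $C_Q$. Your skeleton-by-skeleton construction closes this --- the inductive step works because the restriction of the partially built map to $\partial\Vert\sigma\Vert$ lands in $C_Q(\tau) \subseteq C_Q(\sigma)$ for every proper face $\tau$, and a map $S^{k-1} \to C_Q(\sigma)$ into a contractible space is nullhomotopic and hence extends over the disk. With that existence statement either proved as you sketch or cited in the form used in \cite{walkerHomotopyTypeEuler1981}, the proof is complete.
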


Given an order-preserving morphism of posets $f: P \to Q$, we refer to $f^{-1}_{\leq q}$ and $f^{-1}_{\geq q}$ as the \emph{fibers of $f$}. Depending on $f$, sometimes the fibers of the form $f^{-1}_{\leq q}$ will be contractible, and other times, it will be the fibers of the form $f^{-1}_{\geq q}$ that will be contractible (see Lemma~\ref{eq:Galois_fibers_min_max} below).  

\medskip
If $L: P \leftrightarrows Q:U$ is a Galois connection, then the fibers of $L$ and $U$ are contractible specifically because they have maximum and minimum elements.  

\begin{lemma}
\label{eq:Galois_fibers_min_max}
Let $L: P \leftrightarrows Q: U$ be a Galois connection. For any $q \in Q,$ the poset $L^{-1}_{\leq q} = \{p \in P \, | \, L(p) \leq q \}$ has a maximum element, namely $U(q)$. For any $p \in P$, the poset $U^{-1}_{\geq p} = \{q \in Q \, | \, p \leq U(q) \} $ has a minimum element, namely $L(p)$. 
\end{lemma}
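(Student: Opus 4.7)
The plan is to deduce both statements directly from the defining bi-implication of a Galois connection, namely $L(p) \leq q \iff p \leq U(q)$. Each claim amounts to verifying membership in the relevant fiber and then the universal (maximality/minimality) property, and both parts fall out of the adjunction by instantiating it at the appropriate element.

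For the first claim, I would start by showing $U(q) \in L^{-1}_{\leq q}$. Substituting $p = U(q)$ in the adjunction gives $L(U(q)) \leq q \iff U(q) \leq U(q)$, and since the right-hand side is trivially true, we get $L(U(q)) \leq q$. For maximality, take any $p' \in L^{-1}_{\leq q}$, so $L(p') \leq q$; by the adjunction this is equivalent to $p' \leq U(q)$, which is precisely the assertion that $U(q)$ dominates every element of the fiber.

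The second claim is handled by the dual argument. To show $L(p) \in U^{-1}_{\geq p}$, instantiate the adjunction at $q = L(p)$: the trivial inequality $L(p) \leq L(p)$ is equivalent to $p \leq U(L(p))$, giving membership. For minimality, any $q' \in U^{-1}_{\geq p}$ satisfies $p \leq U(q')$, which by the adjunction is equivalent to $L(p) \leq q'$, so $L(p)$ is a lower bound of the fiber.

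There is no real obstacle here; the whole content is a careful bookkeeping of the two directions of the Galois bi-implication applied to well-chosen elements. The only thing to be mindful of is that the lemma simultaneously packages the closure identities $L \circ U \circ L = L$ and $U \circ L \circ U = U$ (not needed here) with the universal descriptions of the fibers, and the proof as written only needs the raw adjunction identity twice, once for each statement.
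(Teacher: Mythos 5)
Your proof is correct and is exactly the argument the paper intends (the paper states this lemma without proof): both membership and the extremality property follow from instantiating the adjunction $L(p)\leq q \iff p\leq U(q)$ at $p=U(q)$ and $q=L(p)$ respectively. The only minor quibble is your closing aside — the lemma does not really ``package'' the identities $LUL=L$ and $ULU=U$; it only uses $LU(q)\leq q$ and $p\leq UL(p)$ — but this does not affect the validity of the proof.
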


In particular, it follows that all fibers $L^{-1}_{\leq q}$ and $U^{-1}_{\geq p}$ are contractible. Lemma \ref{lemma:Galois_htpyeq} then follows from Quillen's Poset Fiber Lemma. 

There is a slight variation of Quillen's Poset Fiber Lemma that is useful when product posets are involved. Given posets $P$ and $Q$, let $P \times Q$ be the product poset with the product order $(p,q) \leq (p', q')$ if $p \leq p'$ and $q \leq q'$. Given a subset $Z$ of a poset $X$, we say that $Z$ is \emph{downward closed} if $x \in Z$ and $x' \leq x$ implies $x' \in Z$. 

\begin{lemma}
[Quillen's Product Fiber Lemma, Proposition 1.7 \cite{quillenHomotopyPropertiesPoset1978}]
\label{lemma:QuillenProductFiber}
Let $P$ and $Q$ be posets, and let $Z$ be a downward closed subset of $P \times Q$. Let $\pi_P: Z \to P$ and $\pi_Q: Z \to Q$ be the maps induced by the projection maps. If $Z_q = \{p \in P \, | \, (p, q )\in Z \}$ is contractible for all $q \in Q$, then $\|\pi_Q\|: \|\Delta Z\| \to \|\Delta Q\|$ is a homotopy equivalence.
\end{lemma}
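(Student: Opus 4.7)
The plan is to apply Quillen's poset fiber lemma (Lemma~\ref{lemma:Quillen_fiber}) to the order-preserving projection $\pi_Q\from Z \to Q$. Since the hypothesis concerns the set-theoretic fiber $Z_q$, which sits inside $Z$ as the elements of the form $(p, q)$ with $q$ fixed in the second coordinate, I expect to verify the $\geq$ version: namely, that $\pi_Q^{-1}_{\geq q} = \{(p', q') \in Z \mid q' \geq q\}$ is contractible for every $q \in Q$.

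Fix $q \in Q$. The key step is to deformation retract $\pi_Q^{-1}_{\geq q}$ onto its subposet $Z_q \times \{q\}$, which is isomorphic to $Z_q$ as a poset and hence contractible by hypothesis. Define $\rho(p', q') := (p', q)$. This is where the closedness of $Z$ plays its role: for $(p', q') \in \pi_Q^{-1}_{\geq q}$ we have $(p', q) \leq (p', q')$ in $P \times Q$, so $(p', q) \in Z$ by downward closedness and therefore $p' \in Z_q$. Thus $\rho$ is well-defined with image in $Z_q \times \{q\}$. A direct check shows that $\rho$ is order-preserving and satisfies $\rho(p', q') \leq (p', q')$ pointwise, while $\rho$ restricted to $Z_q \times \{q\}$ is the identity. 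The order homotopy lemma (Lemma~\ref{lemma:order_htpy_thm}) then upgrades this into a pair of mutually inverse homotopy equivalences $\Vert \Delta(\pi_Q^{-1}_{\geq q}) \Vert \htpyeq \Vert \Delta Z_q \Vert$, yielding contractibility of the fiber. Quillen's fiber lemma then concludes that $\pi_Q$ induces a homotopy equivalence on order complexes.

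The main subtlety is matching the direction of closedness of $Z$ (downward in $P \times Q$) to the correct orientation of the fibers. With the $\leq$ version one would naturally try to send $(p', q')$ satisfying $q' \leq q$ to $(p', q)$, but then $(p', q) \geq (p', q')$ and closedness does not propagate upward, so there is no reason for $(p', q)$ to lie in $Z$ and no natural retraction onto $Z_q$. Once the correct $\geq$ version is chosen, the rest of the argument reduces to one application of the order homotopy lemma followed by Quillen's poset fiber lemma.
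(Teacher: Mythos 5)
Your proof is correct. The paper does not prove this lemma --- it cites it as Proposition 1.7 of Quillen --- and your argument is essentially Quillen's original one: the projection $\pi_Q$ is order-preserving, the fiber $\pi_Q^{-1}{}_{\geq q}$ retracts onto $Z_q\times\{q\}$ via $(p',q')\mapsto(p',q)$ (well-defined precisely because $Z$ is downward closed), the order homotopy lemma makes this retraction a homotopy equivalence onto the contractible $Z_q$, and Quillen's poset fiber lemma finishes. Your remark about why the $\leq$-fibers are the wrong choice here is also accurate.
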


\subsection{CW posets}
In Section~\ref{sec:short_new_proofs}, we will work with a specific type of poset called a CW poset, which allows us to discuss face posets of CW complexes that are not simplicial complexes, such as complexes obtained by taking products of simplicial complexes.

\begin{definition}[ \cite{bjornerPosetsRegularCW1984} ]
A poset $P$ is said to be a \emph{CW poset} if 
\begin{itemize}\setlength\itemsep{0.1em}
    \item $P$ has a least element $\hat{0}$,
    \item $P$ is nontrivial, i.e., has more than one element, and
    \item for all $x \in P \setminus \{ \hat{0} \}$, the open interval $(\hat{0}, x) = \{ p \in P \, | \, \hat{0} < p < x \}$ has an order complex that is homeomorphic to a sphere. 
\end{itemize}
\end{definition}

Face posets of simplicial complexes, in particular, are CW posets after augmenting with the least element $\hat{0}$. Given two CW posets $P$ and $Q$, the product poset $P \times Q$ is a CW poset \cite{bjornerPosetsRegularCW1984}. 

Given a CW complex $K$ and its cell decomposition, define its face poset $P_K$ to be the set of all closed cells ordered by containment. Note that in \cite{bjornerPosetsRegularCW1984}, this $P_K$ is augmented with a least element $\hat{0}$.

In general, the order complex $\Delta P_K$ doesn't reveal the topology of $K$. Recall that a CW complex is called \emph{regular} if all closed cells are homeomorphic to closed balls $E^n$.  
When $K$ is a regular CW complex, such as a simplicial complex, then  $\Delta P_K $ is homeomorphic to $K$ \cite{bjornerPosetsRegularCW1984}. Thus, regular CW complexes are extremely nice in that the incidence relations of cells determine the topology. In fact, the order complex $\Delta P_K $ provides a subdivision of the regular CW complex (\cite{lundellTopologyCWComplexes1969} proof of Theorem 1.7, p.80).

\begin{lemma} 
[\cite{bjornerPosetsRegularCW1984}]
\label{lemma:CW_poset}
A poset $P$ is a CW poset if and only if it is isomorphic to the face poset of a regular CW complex. 
\end{lemma}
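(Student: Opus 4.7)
The plan is to prove the two directions of the biconditional separately. The forward direction — that the augmented face poset of a regular CW complex satisfies the three CW-poset axioms — is the easier one and rests on the paper's earlier observation that $\Vert \Delta(P_K \setminus \{\hat{0}\}) \Vert \iso K$ for regular $K$. The reverse direction requires an inductive geometric construction of a CW complex from purely combinatorial data.

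For the forward implication, let $K$ be a regular CW complex and $P = P_K$. The first two axioms (least element, nontriviality) are immediate from the construction. For the sphericity axiom, fix a closed $n$-cell $e \in K$. Its proper closed faces form a regular CW decomposition of $\partial e \iso S^{n-1}$, and the elements of the open interval $(\hat{0}, e)$ in $P$ are precisely these proper faces. Applying $\Vert \Delta(P_L \setminus \{\hat{0}\}) \Vert \iso L$ to $L = \partial e$ identifies $\Vert \Delta((\hat{0}, e)) \Vert$ with $S^{n-1}$, verifying the last axiom.

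For the converse, suppose $P$ is a CW poset. I would enumerate $P \setminus \{\hat{0}\}$ in a linear extension of the order and build a sequence of regular CW complexes inductively, attaching one cell per element. The inductive hypothesis would assert that at each stage, the subcomplex built so far is regular and its augmented face poset is isomorphic to the corresponding subposet of $P$. When the time comes to attach the cell for $x$, the subcomplex $L_x$ indexed by $(\hat{0}, x)$ has underlying space $\Vert \Delta((\hat{0}, x)) \Vert$ by regularity, which by the CW-poset axiom is homeomorphic to a sphere $S^{n_x - 1}$. Attach a closed $n_x$-cell along a homeomorphism $S^{n_x - 1} \to L_x$. The colimit of the sequence is the desired regular CW complex, and the correspondence $x \mapsto e_x$ gives the claimed isomorphism of augmented face posets.

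The hard part will be maintaining regularity throughout the inductive construction: each attaching map must be a homeomorphism onto its image, not merely a continuous surjection, so that the closure of every newly attached cell is homeomorphic to a closed ball. This is precisely where the strong sphere condition of the CW-poset axiom is essential — it supplies an actual homeomorphism at each stage rather than a mere homotopy equivalence — and where the inductive hypothesis that $L_x$ is itself regular is crucial, ensuring its underlying space is a genuine topological sphere along which one can attach in a regular fashion. A secondary technical point is that the dimension $n_x$ is forced, and consistent with the rank of $x$ in $P$, by the dimension of the sphere produced by the CW-poset axiom, so no ambiguity arises in the inductive bookkeeping.
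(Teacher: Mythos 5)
The paper does not prove this lemma at all: it is quoted directly from Bj\"orner's \emph{Posets, regular CW complexes and Bruhat order} with only a citation, so there is no in-paper argument to compare against. Your sketch is, in substance, a correct reconstruction of Bj\"orner's original proof, and both directions are organized the right way. Two points deserve attention. First, in the forward direction you silently use the fact that a regular CW complex is \emph{normal} --- the closure of every cell is a subcomplex --- so that the proper faces of a closed $n$-cell $e$ really do form a regular CW decomposition of $\partial e \cong S^{n-1}$ and really do exhaust the open interval $(\hat{0},e)$; this is true but is itself a nontrivial lemma (it is where regularity is genuinely used) and should be stated. Second, in the reverse direction your cell-by-cell attachment along a linear extension works, but the cleaner route (and the one Bj\"orner takes) is to put the regular CW structure directly on $\Vert \Delta(P\setminus\{\hat{0}\})\Vert$: the closed cell for $x$ is $\Vert \Delta((\hat{0},x])\Vert$, which is the cone on $\Vert\Delta((\hat{0},x))\Vert \cong S^{n_x-1}$ and hence a ball, with boundary exactly the subcomplex indexed by $(\hat{0},x)$. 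This avoids the bookkeeping about colimits and about attaching maps being homeomorphisms, since the cells are already embedded in a fixed simplicial complex; it also makes transparent the paper's remark that the order complex subdivides the CW complex. Your version is not wrong --- the attaching map you propose is injective, so no identifications occur and regularity is preserved --- but you should make explicit that the closed cell $e_x \cup \Vert L_x\Vert$ is a ball because it is the quotient of $E^{n_x}$ by an injective boundary identification, and that gradedness of $P$ (which follows from the sphere condition) is what pins down $n_x$.
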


For example, let $P$ be the poset illustrated in Figure~\ref{fig:CW_poset} (left). The poset, augmented with $\hat{0}$, is a CW poset as it is the face poset of a regular CW complex $K$ shown in Figure~\ref{fig:CW_poset} (center). The order complex $\Delta P$, shown in Figure~\ref{fig:CW_poset} (right), is a subdivision of the CW complex.

\begin{figure}[h!]
    \centering
    \includegraphics[scale=0.3]{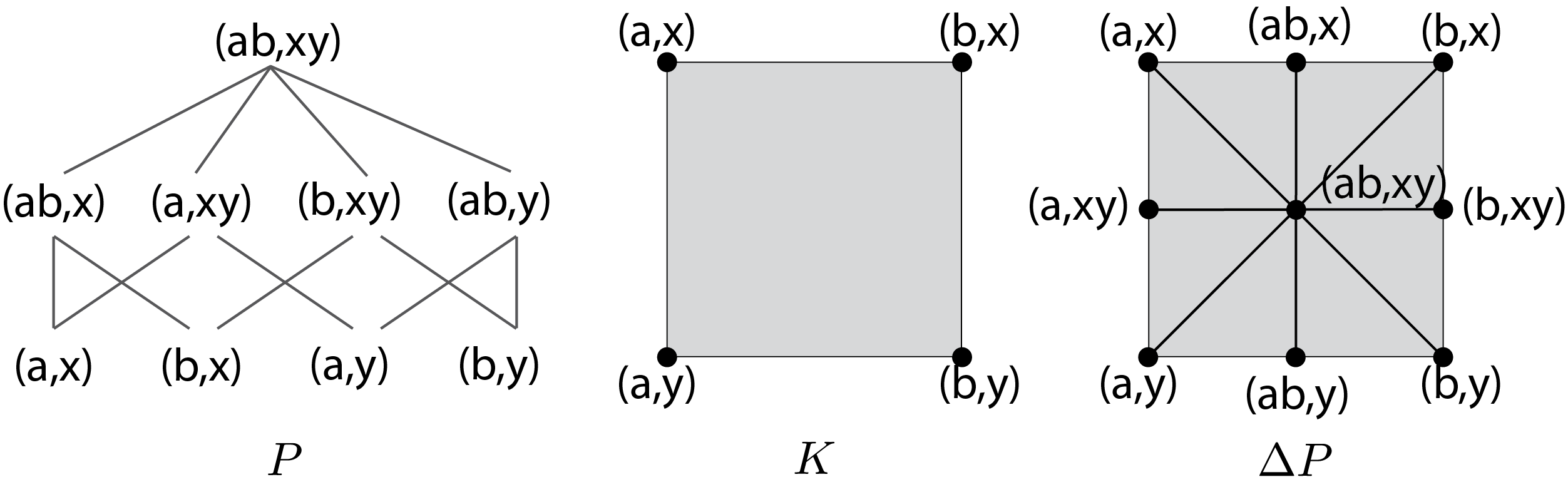}
    \caption{Example CW poset. }
    \label{fig:CW_poset}
\end{figure}

\section{Short, new proofs of Dowker duality}
\label{sec:short_new_proofs}
We now present three short new proofs of Dowker duality. In Section~\ref{sec:Dowker_Galois}, we use Galois connections. In Section~\ref{sec:Dowker_relational_join}, we introduce the relational join complex and prove Dowker duality. In Section~\ref{sec:Dowker_relational_product}, we introduce the relational product complex, which is a modification of products of CW complexes, and prove Dowker duality. Finally, in Section~\ref{section:nerve_lemma_pf_via_relational_complexes}, we present proofs of the Nerve Lemma using relational join and relational products. 

Before proceeding with the proofs, let us clarify the notions of relations between sets, posets, and simplicial complexes. As before, a relation between sets $A$ and $X$ is $R \subseteq A \times X$. 

\begin{definition}
\label{def:poset_relations}
A \emph{relation between posets} $P$ and $Q$ is a downward closed subset $\tilde{R} \subseteq P \times Q$. That is, if $p\tilde{R}q$, then $p'\tilde{R}q'$ for all $p' \leq p$ and $q' \leq q$. 

\end{definition}

Given a Galois connection, one can define a relation between the underlying posets as follows. 

\begin{definition}
\label{def:GC_induced_relation}
Let $L: P \leftrightarrows Q: U$ be a Galois connection. Let $\tilde{R} \subseteq P \times Q^{\op}$ be a relation between posets defined by $p\tilde{R}q$ if and only if $Lp \leq q$ in $Q$ (equivalently, $p \leq Uq$ in $P$). We call $\tilde{R}$ the \emph{relation induced by} $(L, U)$. 
\end{definition}

A relation between posets induces a relation between the order complexes as follows.

\begin{definition}
\label{def:poset_relation_induced_ordercomplex_relation}
Let $\tilde{R} \subseteq P \times Q$ be a relation between posets. 
Let $R^*$ be a relation between the face posets of the order complexes defined as follows:  If $\sigma_P = (p_0 < \cdots < p_n)$ is a simplex of $\Delta P$ and $\sigma_Q = (q_0 < \cdots < q_m)$ is a simplex of $\Delta Q$, then $\sigma_P R^* \sigma_Q$ if and only if $p_n \tilde{R} q_m$. We call $R^*$ the \emph{relation induced by} $\tilde{R}$.
\end{definition}
Since $\tilde{R}$ is downward closed, we have that all $p_0, \dots, p_n$ are related to all $q_0, \dots, q_m$ in the above definition.

Lastly, a relation between sets induces a relation between the face posets of Dowker complexes.
\begin{definition}
\label{def:dowker_relations_induced_from_sets}
Let $R \subseteq A \times X$ be a relation between sets. 
Let $P_A, P_X$ denote the face posets of Dowker complexes $D_A$ and $D_X$. Let $\tilde{R} \subseteq P_A \times P_X$ be a relation between the face posets of Dowker complexes defined by $\sigma_A \tilde{R} \sigma_X$ if and only if $aRx$ for all $a \in \sigma_A$ and $x \in \sigma_X$. We call $\tilde{R}$ the \emph{relation induced by} $R$.
\end{definition}

\subsection{Dowker duality via Galois connections}
\label{sec:Dowker_Galois}
We present the first short proof of Dowker duality using Galois connections. It uses the fact that a relation $R \subseteq A \times X$ induces a Galois connection between the power sets of $A$ and $X$. Because the simplices in a Dowker complex consist of non-empty, finite subsets, this particular proof is limited to relations $R \subseteq A \times X$ where every element is related to a finite number of elements. The two other proofs presented in Sections~\ref{sec:Dowker_relational_join} and \ref{sec:Dowker_relational_product} do not have this restriction.

\begin{lemma} Let $R \subseteq A \times X$ be a relation between sets such that every element is related to a finite number of elements. Let $P_A$ and $P_X$ be the face posets of the Dowker complexes $D_A$ and $D_X$. Define $L: P_A \to P_X^{\op}$ by $L(\sigma_A) = \{x \in X \; | \; aRx \; \forall a \in \sigma_A \}$, and define $U: P_X^{\op} \to P_A$ by $U(\sigma_X) = \{a \in A \; | \; aRx \; \forall x \in \sigma_X \}.$ The morphisms $L$ and $U$ form a Galois connection $L: P_A \leftrightarrows P_X^{\op}: U$. 
\end{lemma}
We refer to the resulting Galois connection  $L: P_A \leftrightarrows P_X^{\op}: U$ as the \emph{Galois connection induced by R}.

\begin{theorem}
[Dowker duality via Galois connections]
\label{thm:dowker_Galois}
Let $R \subseteq A \times X$ be a relation between sets such that every element is related to a finite number of elements. 
Then $\|D_A\| \htpyeq \|D_X\|$.
\end{theorem}

\begin{proof}

Let $L: P_A \leftrightarrows P_X^{\op}: U$ be the Galois connection induced by $R$. By Lemma~\ref{lemma:Galois_htpyeq}, $|| \Delta P_A || \htpyeq || \Delta P_X||$. Note that $\Delta P_A$ is the barycentric subdivision of $D_A$, and $\Delta P_X$ is the barycentric subdivision of $D_X$. Thus, $\|D_A\| \htpyeq \|D_X\|$.

\end{proof}

\begin{example}
\label{example:running}
\normalfont
Consider the following example from \cite{brunRectangleComplexRelation2022}.
Let $A = \{ a, b, c, d\}$ and $X = \{ w, x, y, z \}$, and let $R$ be the relation  
\[R = 
\begin{blockarray}{ccccc}
 & w & x & y & z \\
\begin{block}{c[cccc]}
a & 0 & 1& 0 & 1\bigstrut[t] \\
b & 0 & 1 & 1 & 0  \\
c & 0 & 0 & 1 & 1\\
d & 1 & 0& 1 & 0 \bigstrut[b]\\
\end{block}
\end{blockarray} \, .
\]
As expected, $\|D_A\| \htpyeq \|D_X\|$ (Figure~\ref{fig:example_GC}A). Figure~\ref{fig:example_GC}B shows the face posets $P_A$ and $P_X$ of the Dowker complexes. 
Figure~\ref{fig:example_GC}C illustrates the Galois connection $L: P_A \leftrightarrows P_X^{\op}: U$.

\begin{figure}[h!]
    \centering
    \includegraphics[width=\linewidth]{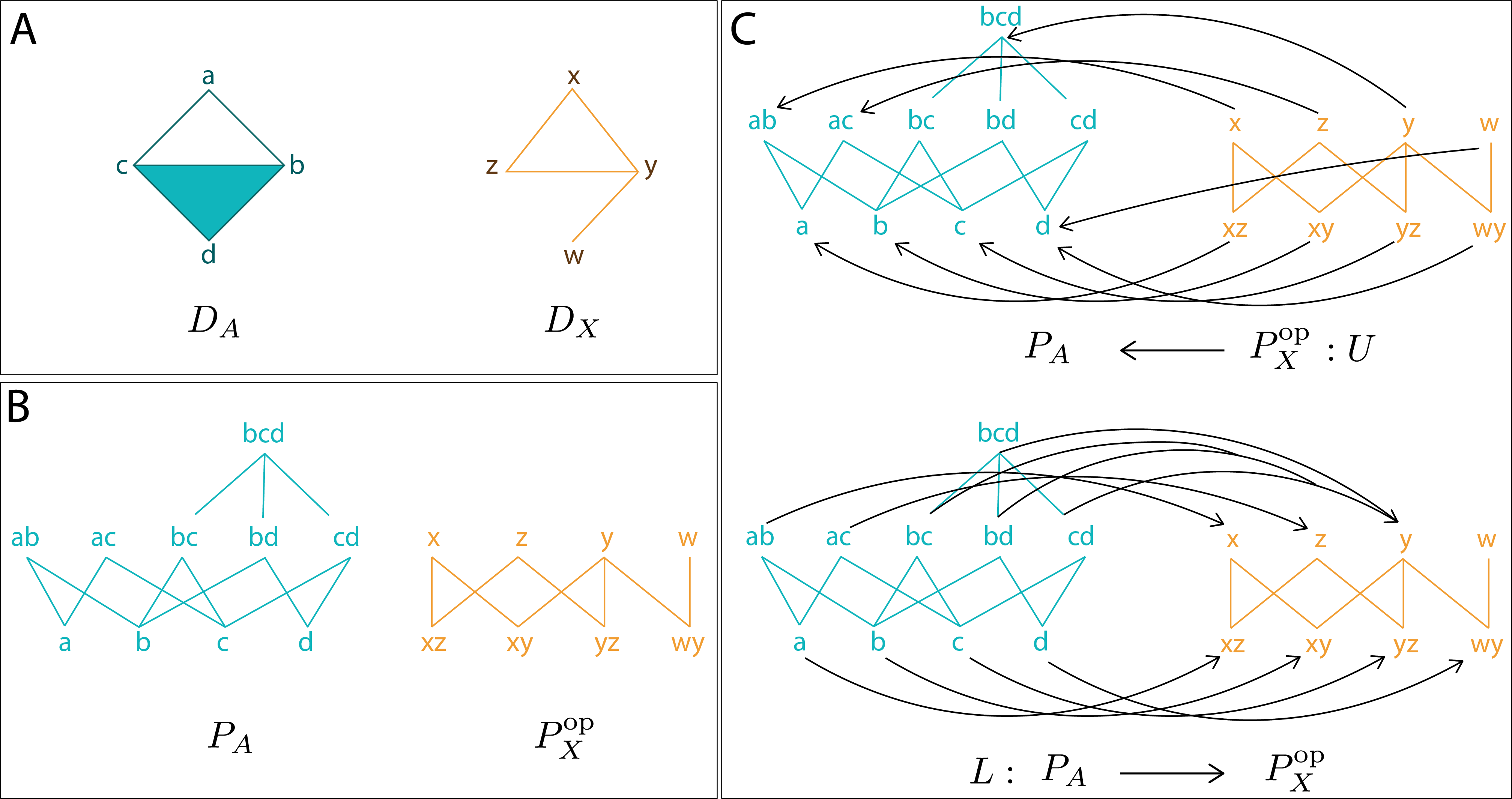}
    \caption{Example Galois connection on the face posets of Dowker complexes corresponding to Example~\ref{example:running}. \textbf{A.} Dowker complexes $D_A$ and $D_X$. \textbf{B.} Face posets $P_A$, $P_X^{\op}$ of the Dowker complexes $D_A$ and $D_X$. \textbf{C.} Illustration of the Galois connection $L: P_A \leftrightarrows P_X^{\op}: U$. }
    \label{fig:example_GC}
\end{figure}

\end{example}

We now prove the Functorial Dowker duality using Galois connections.

\begin{theorem}
[Functorial Dowker duality via Galois connections]
\label{thm:functorialDowker_filteredGC}
Let $R \subseteq A \times X$ and $R' \subseteq A' \times X'$ be relations between sets where every element is related to a finite number of elements. 
Let $f: A \to A'$ and $g: X \to X'$ be set maps such that $(f(a), g(x)) \in R'$ for all $(a, x) \in R$. Let $D_f: D_A \to D_{A'}$ and $D_g: D_X \to D_{X'}$ be the simplicial maps induced by the maps $f$ and $g$ on the Dowker complexes. There exist homotopy equivalences $\Gamma: \|D_A\| \to \|D_X\|$ and $\Gamma': \|D_{A'}\| \to \|D_{X'}\|$ such that the following diagram commutes up to homotopy.
\begin{equation} 
\label{diagram:functorial_Dowker_GC}
\begin{tikzcd}
\|D_A\| \arrow[r, "\|D_f\|"] \arrow[d, "\Gamma"] & \|D_{A'}\| \arrow[d, "\Gamma'"]\\
\|D_{X}\| \arrow[r, "\|D_g\|"] & \|D_{X'}\|
\end{tikzcd}
\end{equation}
\end{theorem}

\begin{proof}
Let $P_A, P_{A'}, P_X$, and $P_{X'}$ denote the face posets of the Dowker complexes. Let $L: P_A \leftrightarrows P_X^{\op}: U$ and $L': P_{A'} \leftrightarrows P^{\op}_{X'}: U'$ denote the Galois connections induced by the relations $R$ and $R'$. 

Let $\alpha: P_A \to P_{A'}$ and $\beta: P^{\op}_X \to P^{\op}_{X'}$ be maps of posets induced by $f$ and $g$ on the face posets of the Dowker complexes. Explicitly, given a simplex $a_0 \dots a_k \in P_A$, we have $\alpha(a_0 \dots a_k) = f(a_0) \dots f(a_k)$. Similarly, $\beta(x_0 \dots x_{m}) = g(x_0) \dots g(x_m)$.

Consider the diagram
\[
\begin{tikzcd}
P_A  \arrow{r}{\alpha}  \arrow[d, "L"] & P_{A'} \arrow[d, "L'"] \\
P^{\op}_X \arrow[r, "\beta", swap] & P^{\op}_{X'}   \,. 
\end{tikzcd}
\]
Let $\sigma_A \in P_A$. Then 
\begin{align*}
\beta \circ L (\sigma_A) &= \beta(\sigma_X), \quad \text{ where } \sigma_X = \{ x \in X\, | \, aRx \  \forall a \in \sigma_A \} \\
&= \{ g(x) \in X' \, | \, aRx \ \forall a \in \sigma_A \}.
\end{align*}
On the other hand, ${L' \circ \alpha(\sigma_A) = \{x'  \in X' \, | \, f(a)R'x'  \   \forall a \in \sigma_A \}}$. Then, $L' \circ \alpha(\sigma_A) \leq \beta \circ L(\sigma_A) $ in $P_{X'}^{\op}$ for all $\sigma_A \in P_A$. By the Order Homotopy Lemma (Lemma \ref{lemma:order_htpy_thm}), it follows that $\|L'\| \circ \|\alpha\|  \htpyeq \|\beta\| \circ \|L\|$ in the following diagram. 
\[
\begin{tikzcd}
\Vert \Delta P_A \Vert \arrow{r}{\|\alpha\|} \arrow[d, swap, "\|L\|"] & \Vert \Delta P_{A'} \Vert \arrow[d, swap, "\|L'\|"] \\
\Vert \Delta P^{\op}_X \Vert \arrow[r,swap, "\|\beta\|"] & \Vert \Delta P^{\op}_{X'} \Vert 
\end{tikzcd} 
\]
Note that $\|L \|$ and $\|L'\|$ are homotopy equivalences. Appending the diagram with the usual homeomorphism between the Dowker complexes and their barycentric subdivisions, we obtain the following diagram. 
\[
\begin{tikzcd}
\|D_A\| & \arrow[l, "\cong", swap] \Vert \Delta P_A \Vert \arrow{r}{\|\alpha\|} \arrow[d, swap, "\|L\|"] & \Vert \Delta P_{A'} \Vert \arrow[d, swap, "\|L'\|"] \arrow[r, "\cong"] & \|D_{A'}\|\\
\|D_X \| & \arrow[l, "\cong"] \Vert \Delta P^{\op}_X \Vert \arrow[r,swap, "\|\beta\|"] & \Vert \Delta P^{\op}_{X'} \Vert  \arrow[r, "\cong", swap] & \| D_{X'}. \|
\end{tikzcd} 
\]
Considering the inverse maps of appropriate homeomorphisms, we obtain Diagram~\ref{diagram:functorial_Dowker_GC}, which commutes up to homotopy.
\end{proof}

\begin{remark}
\normalfont
Let us clarify a subtle difference between Dowker duality and the Nerve Lemma. In the context of Dowker duality, a relation between sets $R \subseteq A \times X$ in which every element is related to a finite number of elements induces a Galois connection $L: P_A \leftrightarrows P^{\op}_X: U$ between the face posets of Dowker complexes. 
For contrast, let $K$ be a simplicial complex and let $\mathcal{U}$ be a cover of $K$. Let $P_K$ denote the face poset of $K$, and let $P_{N\mathcal{U}}$ denote the face poset of the nerve $N\mathcal{U}$. Then, there is a relation $\tilde{R} \subseteq P_K \times P_{N\mathcal{U}}$ where $\sigma \tilde{R} \tau$ if $\sigma$ is covered by the cover elements corresponding to $\tau$. However, we don't necessarily have a Galois connection between $P_K$ and $P_{N\mathcal{U}}^{\op}$, even when $\mathcal{U}$ is a good cover. See Figure~\ref{fig:nerve_poset} for an example. Here, $K$ is the simplicial complex (teal), and $\mathcal{U}$ is a cover consisting of one cover element $x$ (orange). Figure~\ref{fig:nerve_poset}B illustrates the posets $P_K$ and $P_{N\mathcal{U}}$. The only possible map $L: P_K \to P_{N\mathcal{U}}^{\op}$ is the constant map to $x$, and $Lp \leq x$ for all $p \in P_K$. However, there is no map $U: P_{N\mathcal{U}}^{\op} \to P_K$ so that 
$L: P_K \leftrightarrows P^{\op}_{N\mathcal{U}}: U$ form a Galois connection. When $\mathcal{U}$ is a good cover, the fibers $L^{-1}_{\leq q}$ will be contractible by assumption. However, the fibers won't necessarily have minimum or maximum elements, since $L$ isn't necessarily a part of a Galois connection. 

\end{remark}

\begin{figure}[h]
    \centering
    \includegraphics[width=0.5\linewidth]{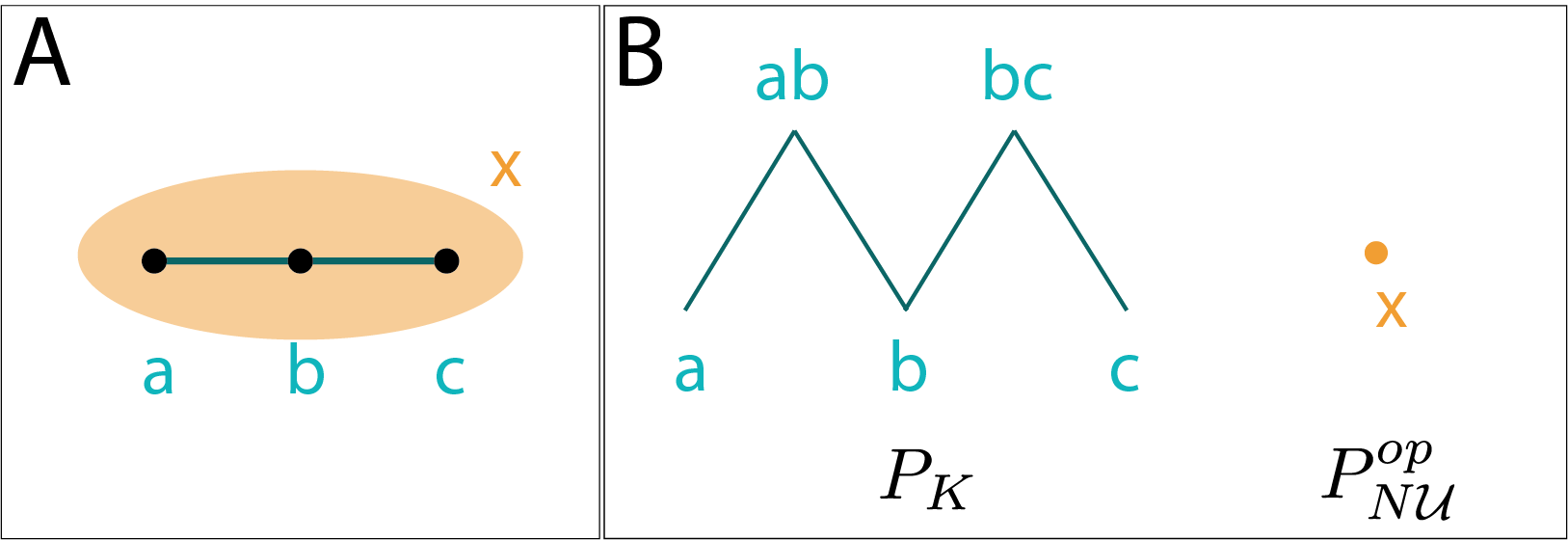}
    \caption{Example illustrating the lack of Galois connection in the context of Nerve Lemma. \textbf{A.} Simplicial complex $K$ (teal) and a cover $\mathcal{U} = \{ x \}$ (orange). \textbf{B.} The face posets $P_K$ and $P_{N\mathcal{U}}^{\op}$. There can be no Galois connection $L: P_K \leftrightarrows P_{N\mathcal{U}}^{\op}: U$. }
    \label{fig:nerve_poset}
\end{figure}

\subsection{Dowker duality via relational join}
\label{sec:Dowker_relational_join}
We now introduce the relational join complex and present a second new proof of Dowker duality. %

\begin{definition}
\label{def:relational_join}
Let $K, M$ be simplicial complexes with disjoint vertex sets, and let $\tilde{R} \subseteq P_K \times P_M$ be a relation between the face posets.
Let $K \star_{\tilde{R}} M$ be the abstract simplicial complex
\[ K \star_{\tilde{R}} M = K \cup M \cup \{\sigma_K \cup \sigma_M  \, | \, \sigma_K \tilde{R} \sigma_M \}. \]
We call $K \star_{\tilde{R}} M$ the \emph{relational join complex}. 
\end{definition}

Note that $K \star_{\tilde{R}} M$ is a subcomplex of the join $K \star M$. Furthermore, there is an inclusion of simplicial complexes $K \hookrightarrow K \star_{\tilde{R}} M$ and $M \hookrightarrow K \star_{\tilde{R}} M$. 

Given a relation $R \subseteq A \times X$ between sets, we can construct a relational join complex from the two Dowker complexes.

\begin{definition}
\label{def:Dowker_join_complex}
Let $R \subseteq A \times X$ be a relation between sets and 
let $\tilde{R} \subseteq P_A \times P_X$ be the induced relation on the face posets $P_A$, $P_X$ of Dowker complexes $D_A$ and $D_X$. 
(Definition~\ref{def:dowker_relations_induced_from_sets}). We refer to the relational join complex $D_A \star_{\tilde{R}} D_X$ as the \emph{Dowker join complex}. Explicitly, 
\[D_A \star_{\tilde{R}} D_X  =D_A \cup D_X \cup \{\sigma_A \cup \sigma_X \, | \, \sigma_A \in D_A, \; \sigma_X \in D_X, \; aRx \quad \forall a \in \sigma_A, \,  \forall x \in \sigma_X\}.\]
\end{definition}

The Dowker join complex first appeared in the following preprint as the biclique complex \cite{brun_dowker_2024}. There, the authors prove Dowker duality using discrete Morse theory.

\begin{example}
\label{DowkerJoinExample2}
\normalfont
Recall Example~\ref{example:running} with relation
\[
R = \begin{blockarray}{ccccc}
 & w & x & y & z \\
\begin{block}{c[cccc]}
a & 0 & 1& 0 & 1\bigstrut[t] \\
b & 0 & 1 & 1 & 0  \\
c & 0 & 0 & 1 & 1\\
d & 1 & 0& 1 & 0 \bigstrut[b]\\
\end{block}
\end{blockarray} \, .
\]
Figure~\ref{fig:relational_join2} illustrates the Dowker complexes $D_A, D_X$ and the Dowker join complex $D_A \star_{\tilde{R}} D_X$. 

\begin{figure}[h!]
    \centering
    \includegraphics[scale=0.25]{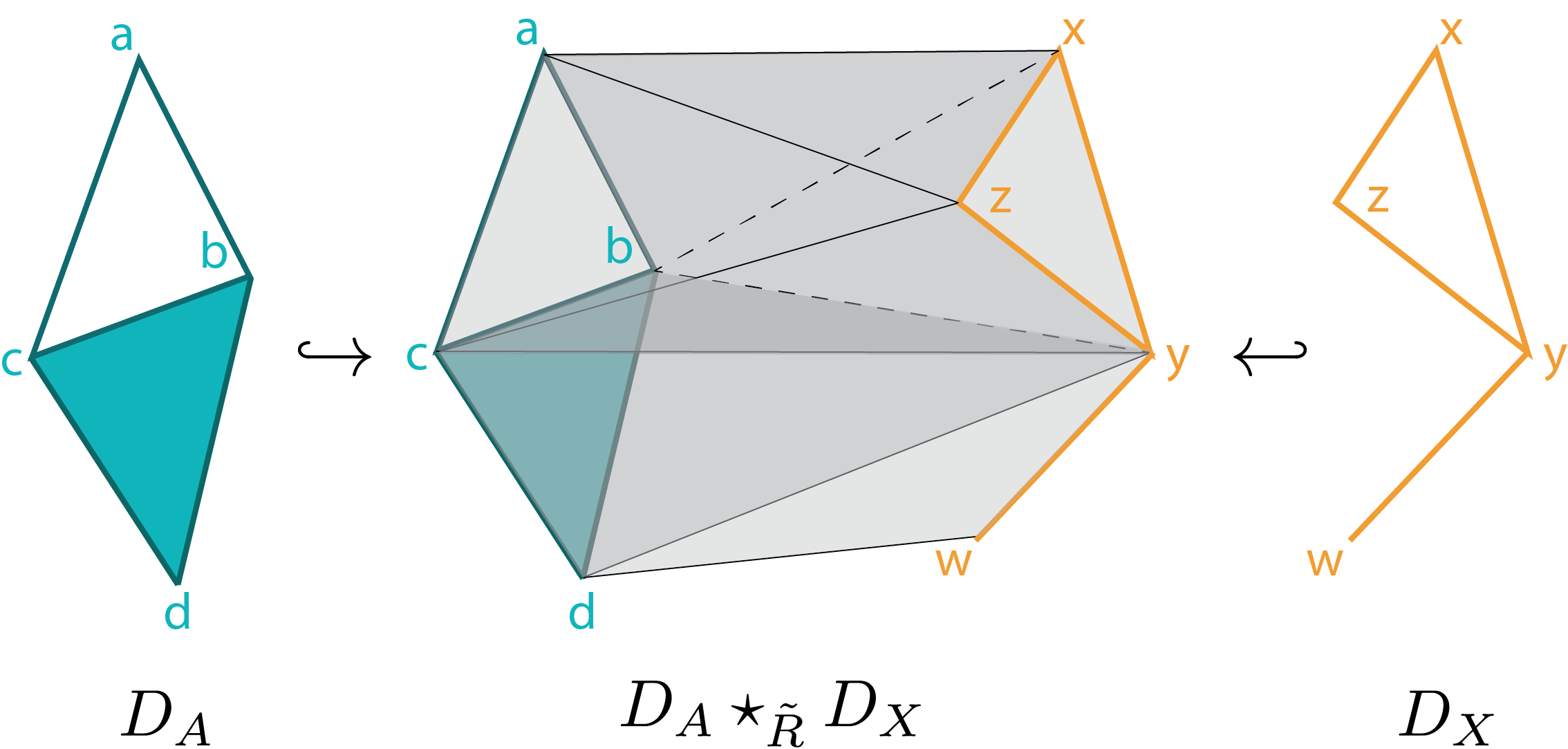}
    \caption{Dowker join complex corresponding to Example~\ref{DowkerJoinExample2}.}
    \label{fig:relational_join2}
\end{figure}

\end{example}

Before establishing a homotopy equivalence between the Dowker join complex and the Dowker complexes, we define an analogous concept for posets.

\begin{definition} (\cite{baclawskiGaloisConnectionsLeray1977, walkerHomotopyTypeEuler1981})
Let $P$ and $Q$ be disjoint posets. Given a relation $\tilde{R} \subseteq P \times Q$ of posets, define a \emph{relational join poset}, denoted $P\star_{\tilde{R}} Q^{\op}$ to be the poset whose underlying set is the disjoint union of $P$ and $Q^{\op}$. The ordering is given by the following:
\begin{enumerate}
\item if $p, p' \in P$: \quad $p \leq p'$ in $P\star_{\tilde{R}} Q^{\op}$ if and only if $p \leq p'$ in $P$ 
\item if $q, q' \in Q^{\op}$: \quad $q \leq q'$ in $P\star_{\tilde{R}} Q^{\op}$ if and only if $q \leq q'$ in $Q^{\op}$ (so $q' \leq q$ in $Q$) 
\item if $p \in P$ and $q \in Q^{\op}$: \quad $p \leq q$ in $P\star_{\tilde{R}}Q^{\op}$ if and only if $p\tilde{R}q$.   
\end{enumerate}
\end{definition}

\medskip

\begin{lemma}
\label{lemma:relational_join_poset_complex}
Given a relation $\tilde{R} \subseteq P \times Q$ of posets, let $R^*$ be the induced relation on the face posets of the order complexes 
(Definition~\ref{def:poset_relation_induced_ordercomplex_relation}). Then, 
\[\Delta(P \star_{\tilde{R}} Q^{\op}) = \Delta P \star_{R^*} \Delta Q.\]
\end{lemma}

\begin{proof}
There are three types of chains in $P \star_{\tilde{R}} Q^{\op}$: chains in $P$, chains in $Q$, and chains of the form $p_0 < \cdots < p_n < q_m < \cdots < q_0$. There are also three types of simplices in $\Delta P \star_{R^*} \Delta Q$: the simplices in $\Delta P$, simplices in $\Delta Q$, and simplices of the form $\sigma_P \cup \sigma_Q$ for $\sigma_P \in \Delta P$ and $\sigma_Q \in \Delta Q$. By construction, there is a bijection between the chains of $P \star_{\tilde{R}} Q^{\op}$ and the simplices of $\Delta P \star_{R^*} \Delta Q$ respecting the three types. In particular, a chain $p_0 < \cdots < p_n < q_m < \cdots < q_0$ in  $P \star_{\tilde{R}} Q^{\op}$ corresponds to the simplex $(p_0 < \cdots < p_n) \cup (q_0 < \cdots < q_m)$ in $\Delta P \star_{R^*} \Delta Q$ and vice versa. 
\end{proof}

\begin{example}
\normalfont
Recall Example \ref{DowkerJoinExample2}. Figure ~\ref{fig:join_example2}A shows the face posets of the Dowker complexes, and Figure~\ref{fig:join_example2}B shows the relational join poset $P_A \star_{\tilde{R}} P_X^{\op}$, highlighting a specific chain that consists of elements in $P_A$ and $P_X^{\op}$. Figure~\ref{fig:join_example2}C shows the order complex of the relational join poset.
\begin{figure}[h!]
    \centering
    \includegraphics[scale=0.25]{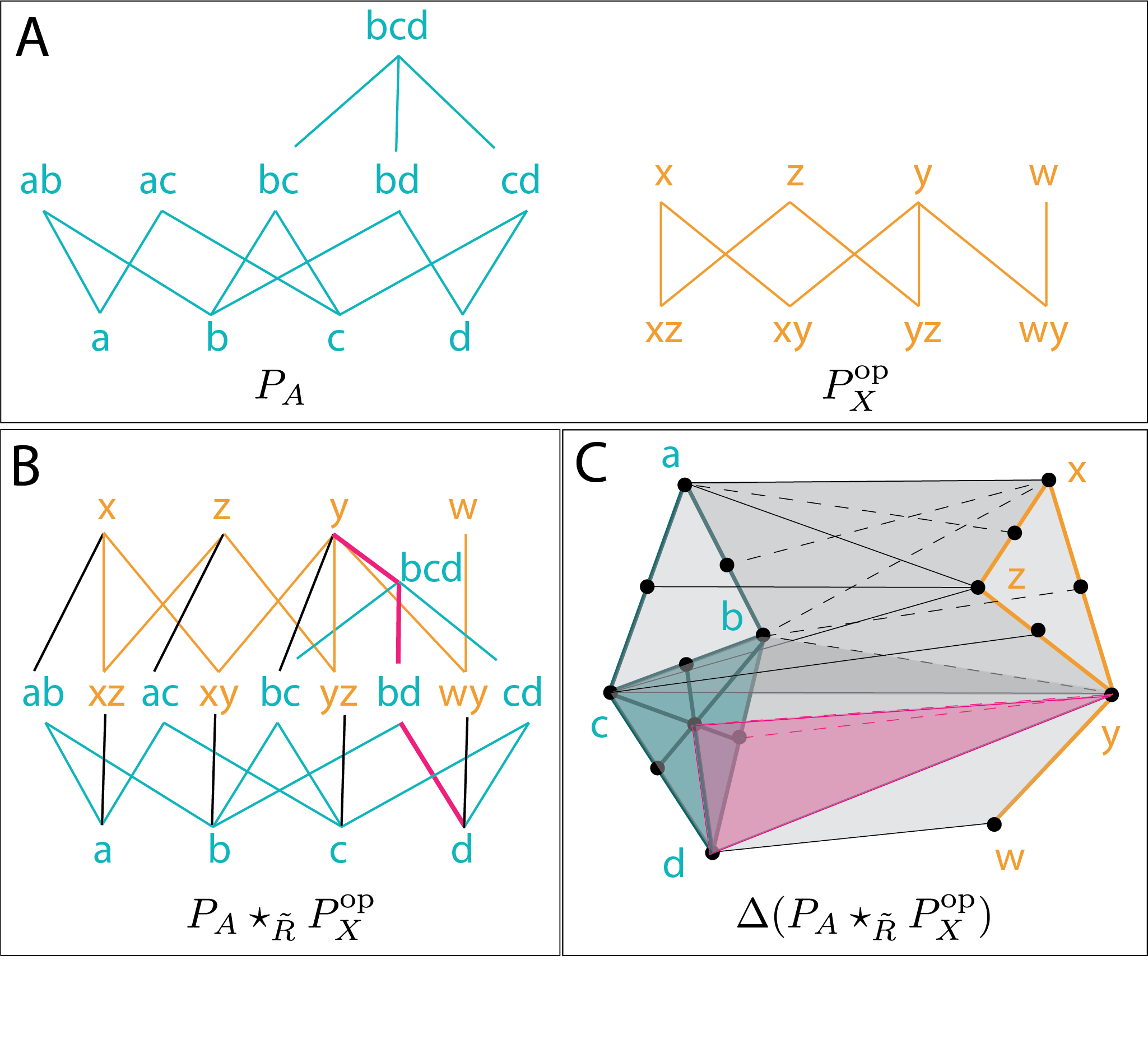}
    \caption{Illustrations of the relational join poset and its order complex. \textbf{A}. Face posets of Dowker complexes corresponding to Example~\ref{example:running}. \textbf{B}. The relational join poset. The chain $d < bd < bcd < y$ is highlighted in red. \textbf{C}. The order complex of the relational join poset. The 3-simplex highlighted in pink corresponds to the highlighted chain in panel B.}
    \label{fig:join_example2}
\end{figure}
\end{example}

We now prove Dowker duality using the relational join complex.

\begin{theorem}
[Dowker duality via relational join]
\label{thm:Dowker_relational_join}
Let $R \subseteq A \times X$ be a relation between sets, and 
let $\tilde{R} \subseteq P_A \times P_X$ be the induced relation between the face posets $P_A$, $P_X$ of Dowker complexes $D_A$ and $D_X$ (Definition~\ref{def:dowker_relations_induced_from_sets}). Then,
\[\|D_A\| \htpyeq \|D_A \star_{\tilde{R}} D_X\| \htpyeq \|D_X\|. \]

\end{theorem}

The proof involves establishing the contractibility of relevant fibers and using Quillen's Poset Fiber Lemma. 

\begin{proof}
We will show that the inclusion of posets
\[P_A \xrightarrow{i} P_A \star_{\tilde{R}} P_X^{\op} \xleftarrow{j} P_X^{\op} \]
induce homotopy equivalences in the order complexes
\begin{equation}
\label{eq:dowker_relationaljoin_htpyeq}
\| \Delta P_A\| \xrightarrow{\htpyeq} \|\Delta (P_A \star_{\tilde{R}} P_X^{\op})\| \xleftarrow{\htpyeq} \|\Delta P_X^{\op}\|.     
\end{equation}

Consider the fibers of $i: P_A \to P_A \star_{\tilde{R}} P_X^{\op}$ of the form $i^{-1}_{\leq \sigma}$. Given $\sigma_A \in P_A$, considered as an element of $P_A \star_{\tilde{R}} P_X^{\op}$, then $i^{-1}_{\leq \sigma_A}$ has a maximum element, namely $\sigma_A$. So $i^{-1}_{\leq \sigma_A}$ is contractible. If $\sigma_X \in P_X^{\op}$, then 
\begin{equation}
\label{eq:fiber_contractible}
\begin{split}
i^{-1}_{\leq \sigma_X} &= \{  \sigma_A \in P_A \, | \, \sigma_A \leq \sigma_X \text{ in } P_A \star_{\tilde{R}} P_X^{\op} \} \\
&= \{ \sigma_A \in P_A \, | \, \sigma_A \tilde{R} \sigma_X\} \\
&= \{ \sigma_A \in P_A \, | \, aRx  \quad \forall  a \in \sigma_A \text{ and } \forall x \in \sigma_X\}.
\end{split}
\end{equation}
Let $A^* \subseteq A$ be the subset $A^* = \{ a \in A \, |  aRx \, \, \forall x \in \sigma_X \}$. Then, $i^{-1}_{\leq \sigma_X}$ is the poset consisting of all non-empty finite subsets of $A^*$, which is contractible. By Quillen's Poset Fiber Lemma  (Lemma~\ref{lemma:Quillen_fiber}), $i$ induces a homotopy equivalence in the order complexes. A similar argument shows that the fibers of $j$ of the form $j^{-1}_{\geq \sigma}$ are contractible. 

Note that $\Delta P_A$ is the barycentric subdivision of $D_A$ and $\Delta P_X^{\op}$ is the barycentric subdivision of $D_X$. From Lemma~\ref{lemma:relational_join_poset_complex}, $\Delta (P_A \star_{\tilde{R}} P_X^{\op}) = \Delta P_A \star_{R^*} \Delta P_X$. Furthermore, $\Delta(P_A \star_{\tilde{R}} P_X^{\op})$ is a subdivision of the Dowker join complex $D_A \star_{\tilde{R}} D_X$ obtained by performing barycentric subdivision on simplices $s$ of $D_A$ and $D_X$ and subdividing all simplices that contain $s$ accordingly. Then, the piecewise linear map $\psi: \|\Delta(P_A \star_{\tilde{R}} P_X^{\op})\| \to \| D_A \star_{\tilde{R}} D_X \|$ mapping each vertex of $ \Delta(P_A \star_{\tilde{R}} P_X^{\op})$ to the corresponding point in $\| D_A \star_{\tilde{R}} D_X \|$ is a homeomorphism (\cite{Spanier1966} Theorem 3.3.4). It follows from Equation~\ref{eq:dowker_relationaljoin_htpyeq} that
\[\|D_A\| \htpyeq \|D_A \star_{\tilde{R}} D_X\| \htpyeq \|D_X\|.\]
\end{proof}
We now prove the functorial Dowker duality via relational joins.
\begin{theorem}[Functorial Dowker duality via relational join]
Let $R \subseteq A \times X$ and $R' \subseteq A' \times X'$ be relations between sets. Let $f: A \to A'$ and $g: X \to X'$ be set maps such that $(f(a), g(x)) \in R'$ for all $(a, x) \in R$. Let $\tilde{R} \subseteq P_A \times P_X$ and $\tilde{R}' \subseteq P_{A'} \times P_{X'}$ be the induced relation between the face posets of the Dowker complexes. Let $D_f: D_A \to D_{A'}$, $D_g: D_X \to D_{X'}$, and $D_{f \star g}: D_A \star_{\tilde{R}} D_X \to D_{A'} \star_{\tilde{R}'} D_{X'}$ be the simplicial maps induced by the maps $f$ and $g$ on the vertex sets. 
Explicitly, 
\[
D_{f \star g}  (\sigma) = \begin{cases}
f(a_0) \dots f(a_n) \quad \text{ if } \sigma = a_0 \dots a_n  \in D_A \\
g(x_0) \dots g(x_m) \quad \text{ if } \sigma = x_0 \dots x_m \in D_X \\
f(a_0) \dots f(a_n) g(x_0) \dots g(x_m) \quad \text{ if } \sigma = a_0 \dots a_n x_0 \dots x_m. 
\end{cases}
\] 
Then, the following diagram commutes up to homotopy.

\begin{equation}
\label{eq:funct_join}
\begin{tikzcd}
\|D_A\|  \arrow[d, "\|D_f\|"] \arrow{r}{\htpyeq} & \|D_A \star_{\tilde{R}} D_X\| \arrow[d, "\|D_{f \star g}\|"] & \|D_X\|  \arrow[l, "\htpyeq", swap] \arrow[d, "\|D_g\|"]  \\
\|D_{A'}\| \arrow{r}{\htpyeq} & \|D_{A'} \star_{\tilde{R}} D_{X'}\| & \arrow[l, "\htpyeq", swap] \|D_{X'}\|
\end{tikzcd}
\end{equation}

\end{theorem}

\begin{proof}
Let us focus on the left square. Let $\tilde{f}: P_A \to P_{A'}$ be the induced map on the face posets of Dowker complexes defined by $\tilde{f}(a_0 \dots a_n) = f(a_0) \dots f(a_n)$. Let $\tilde{g}: P^{\op}_X \to P^{\op}_{X'}$ be defined similarly. Let $\tilde{f} \star \tilde{g}: P_A \star_{\tilde{R}} P^{\op}_X \to  P_{A'} \star_{\tilde{R}} P^{\op}_{X'}$ be the induced map on the relational join poset defined by

\[ 
\tilde{f} \star \tilde{g} (\sigma) =
\begin{cases}
\tilde{f}(\sigma) \quad \text{if } \sigma \in P_A  \\
\tilde{g}(\sigma) \quad \text{if } \sigma \in P_X^{\op} 
\end{cases}
\]

We have the following commutative diagram of posets and spaces. Here, $i$ and $i'$ are inclusion of posets. 
\[
\begin{tikzcd}
P_A  \arrow[d,"\tilde{f}", swap] \arrow{r}{i} & P_A \star_{\tilde{R}} P^{\op}_X \arrow[d, "\tilde{f} \star \tilde{g}"]  \\
P_{A'} \arrow{r}{i'} & P_{A'} \star_{\tilde{R}} P^{\op}_{X'} 
\end{tikzcd}
\hspace{1cm} \text{and} \hspace{1cm}
\begin{tikzcd}
\| \Delta P_A \|  \arrow[d,"\| \tilde{f} \|", swap] \arrow{r}{\|i\|} & \| \Delta( P_A \star_{\tilde{R}} P^{\op}_X ) \|\arrow[d, " \| \tilde{f} \star \tilde{g}\| "]  \\
\| \Delta P_{A'} \| \arrow{r}{\| i'\|} & \| \Delta (P_{A'} \star_{\tilde{R}} P^{\op}_{X'}) \| 
\end{tikzcd}
\] 

Appending the right diagram with appropriate homeomorphisms $\phi, \phi', \psi,$ and $\psi'$ between geometric realizations of simplicial complexes and their subdivisions, we obtain the following diagram. 
\[
\begin{tikzcd}
\| D_A \| \arrow[d, " \| D_f \|", swap] & \arrow[l, "\phi", swap ]
\| \Delta P_A \|  \arrow[d,"\| \tilde{f} \| ", swap] \arrow{r}{\|i\|} &  \| \Delta( P_A \star_{\tilde{R}} P^{\op}_X ) \| \arrow[d, " \|  \tilde{f} \star \tilde{g} \| "] \arrow[r, "\psi"] & \|D_A \star_{\tilde{R}} D_X  \| \arrow[d, "\| D_{f \star g}\|"] 
 \\
\| D_{A'}\|  & \arrow[l, "\phi'", swap] \| \Delta P_{A'} \| \arrow{r}{\|i\|} & \| \Delta (P_{A'} \star_{\tilde{R}} P^{\op}_{X'}) \| \arrow[r, "\psi'"] & \| D_{A'} \star_{\tilde{R}} D_{X'} \|
\end{tikzcd}
\] 
The center square commutes. Note that $\tilde{f}:P_A \to P_{A'}$, when considered as a simplicial map $\tilde{f}: \Delta P_A \to \Delta P_{A'}$, is the map induced by $D_f: D_A \to D_{A'}$ on the order complex of its face posets. So the left square commutes up to homotopy by Lemma~\ref{lemma:functorial_subdivision}. Using a similar argument as Lemma~\ref{lemma:functorial_subdivision}, one can show that the right square also commutes up to homotopy.  Given a simplex 
\[\sigma = (a_0 < a_0a_1< \dots < a_0 a_1 \dots a_n < x_0x_1 \dots x_m < \dots < x_0)\]
of $\Delta(P_A \star_{\tilde{R}} P^{\op}_X)$, let 
\begin{align*}
C(\sigma)  &= \| D_{f \star g} \| ( \|a_0 \dots a_n\,x_0 \dots x_m\|) \\
& = \| f(a_0) \dots f(a_n)\, g(x_0) \dots g(x_m)\| .
\end{align*}
$C(\sigma)$ is contractible for all $\sigma \in \Delta(P_A \star_{\tilde{R}} P^{\op}_X)$ and $C$ carries $\| D_{f \star g}\|  \circ \psi $ and $\psi' \circ \| \tilde{f} \star \tilde{g} \|$. By the Carrier Lemma (Lemma~\ref{lemma:carrierlemma}), the diagram commutes up to homotopy.

A similar proof shows that the right square of Diagram~\ref{eq:funct_join} commutes up to homotopy.
\end{proof}

\subsection{Dowker duality via relational product}
\label{sec:Dowker_relational_product}
We now introduce the relational product complex and present a third proof of Dowker duality. The construction is inspired by the rectangle complex in \cite{brunRectangleComplexRelation2022}.

\begin{definition}
Let $K$, $M$ be simplicial complexes, and 
let $\tilde{R} \subseteq P_K \times P_M$ be a relation between the face posets. The \emph{relational product complex} is the CW-complex
\[ K \times_{\tilde{R}} M = \bigcup_{\sigma \tilde{R} \tau } \|  \sigma\| \times \|\tau\|. \]
\end{definition}

The relational product complex is not necessarily a simplicial complex, as illustrated in examples \ref{different_product_rectangle_example} and \ref{example:relational_product}. While products of CW complexes need not be CW complexes, the product of CW complexes is a CW complex with the compactly generated CW topology \cite{brooke-taylorProductsCWComplexes2018, }.

In fact, since $\|K\|$ and $\|M\|$ are regular CW complexes, their product is also a regular CW complex, and its subcomplex $K \times_{\tilde{R}} M$ is also a regular CW complex \cite{lundellTopologyCWComplexes1969}. 
Note that if $\sigma$ is a $p$-simplex of $K$ and $\tau$ is a $q$-simplex of $M$ that is related to $\sigma$, then $\|\sigma\| \times \|\tau\|$ is a closed $p+q$ cell of $K \times_{\tilde{R}} M$.  

Let $\tilde{R} \subseteq P_K \times P_M$ be a relation between the face posets of simplicial complexes $K$ and $M$. Then, there exists a partial order on $\tilde{R}$ coming from the product partial order on $P_K \times P_M$. 
\begin{lemma}
\label{lemma:R_faceposet_productcomplex}

Let $K, M$ be simplicial complexes, and let $\tilde{R} \subseteq P_K \times P_M$ be a relation between face posets, considered as a subposet of the product poset $P_K \times P_M$. Then, $\tilde{R}$ is the face poset of the relational product complex $K \times_{\tilde{R}} M$. 

\end{lemma}

\begin{proof}
Let $P_{\times}$ be the face poset of the relational product complex $K \times_{\tilde{R}}M$, which is the collection of closed cells ordered by containment \cite{bjornerPosetsRegularCW1984}.

Define a map of posets $\phi: \tilde{R} \to P_{\times}$ by $\phi(\sigma, \tau) = \|\sigma\| \times \|\tau\|$. One can check that $\phi$ is bijective. Note that $\phi$ is order-preserving since 
\begin{align*}
(\sigma, \tau) \leq (\sigma', \tau') \text{ in } \tilde{R} & \iff \text{$\sigma \leq \sigma'$ in $P_K$ and $\tau \leq \tau'$ in $P_M$} \\
& \iff \|\sigma\| \subseteq \|\sigma' \| \text{ in } P_K \text{ and } \| \tau \| \subseteq \|\tau'\| \text{ in } P_M\\
& \iff \text{$\| \sigma\| \times \|\tau\| \subseteq \|\sigma'\| \times \| \tau' \|$ in $K \times_{\tilde{R}} M$} \\
& \iff \text{$\| \sigma \| \times \|\tau\| \leq \| \sigma'\| \times \|\tau'\|$ in $P_\times$}.
\end{align*}
Thus, $\tilde{R}$ and $P_{\times}$ are isomorphic posets.
\end{proof}
Since $\tilde{R}$ is the face poset of $K \times_{\tilde{R}} M$, $\Vert \Delta \tilde{R} \Vert$ is homeomorphic to $ K \times_{\tilde{R}} M$ \cite{bjornerPosetsRegularCW1984}.

Given a relation $R \subseteq A \times X$ between sets, we can construct a relational product complex from the two Dowker complexes.

\begin{definition}
\label{def:Dowker_product_complex}
Let $R \subseteq A \times X$ be a relation between sets and let $\tilde{R} \subseteq P_A \times P_X$ be the induced relation on the face posets $P_A$, $P_X$ of Dowker complexes $D_A$ and $D_X$ (Definition~\ref{def:dowker_relations_induced_from_sets}). We refer to the relational product complex $D_A \times_{\tilde{R}} D_X$ as the \emph{Dowker product complex}. Explicitly,
\[D_A \times_{\tilde{R}} D_X = \bigcup_{\sigma_A \tilde{R} \sigma_X } \| \sigma_A \| \times \| \sigma_X \|. \]
\end{definition}

The Dowker product complex is closely related to the rectangle complex \cite{brunRectangleComplexRelation2022}, as illustrated in the following examples. 


\begin{example}
\label{ex:dowker_product_complex}
\normalfont
Recall the relation 
\[
R = \begin{blockarray}{ccccc}
 & w & x & y & z \\
\begin{block}{c[cccc]}
a & 0 & 1& 0 & 1\bigstrut[t] \\
b & 0 & 1 & 1 & 0  \\
c & 0 & 0 & 1 & 1\\
d & 1 & 0& 1 & 0 \bigstrut[b]\\
\end{block}
\end{blockarray}
\]
from Example~\ref{example:running}, and let $\tilde{R} \subseteq P_A \times P_X$ be the induced relation on the face posets of Dowker complexes. Here, the Dowker product complex and the rectangle complex \cite{brunRectangleComplexRelation2022} are identical (Figure~\ref{fig:relational_product_complexes}A).

\begin{figure}[h!]
    \centering
    \includegraphics[scale=0.4]{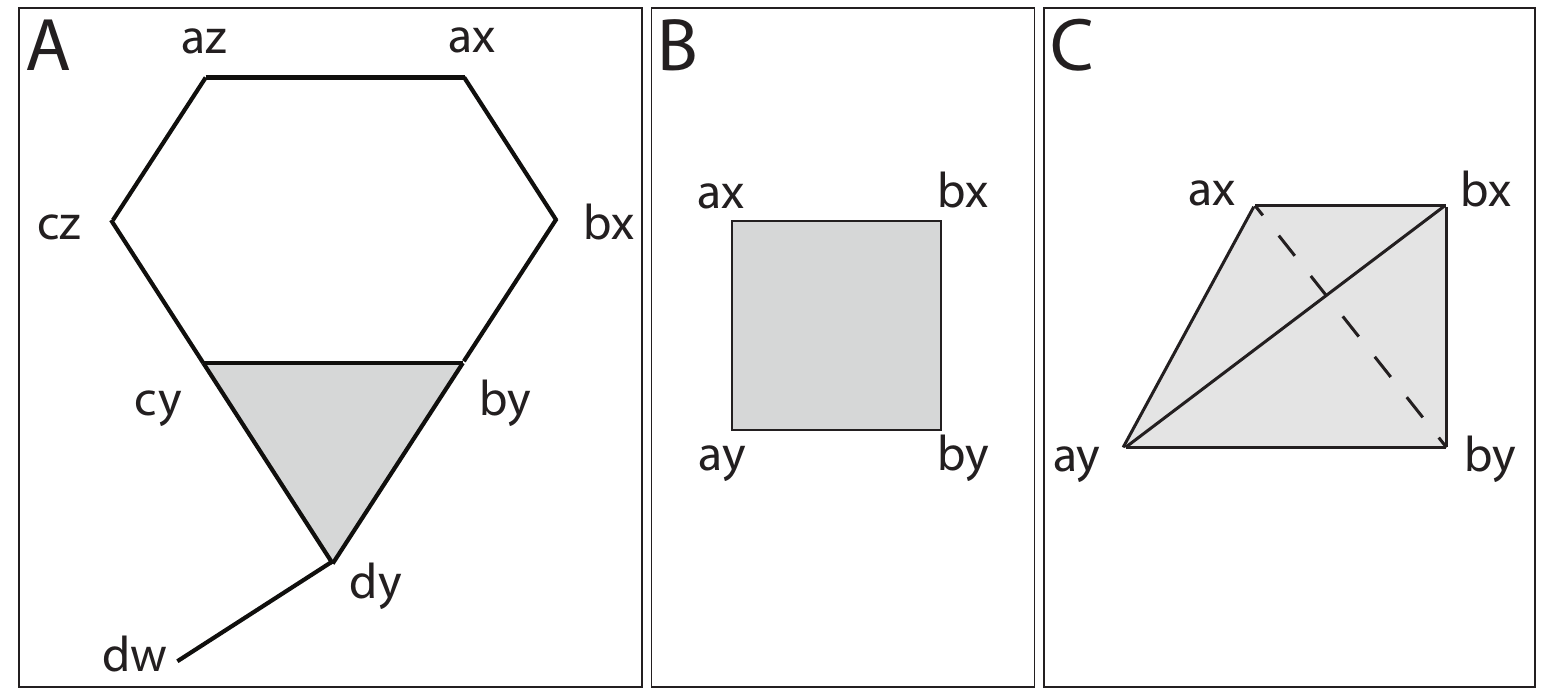}
    \caption{Example Dowker product complexes. 
    \textbf{A}.
    The Dowker product complex and the rectangle complex corresponding to the relation in Example~\ref{ex:dowker_product_complex}.
    \textbf{B}. The Dowker product complex corresponding to Example~\ref{different_product_rectangle_example}. 
    \textbf{C}. The rectangle complex corresponding to Example~\ref{different_product_rectangle_example}.  
    }
    \label{fig:relational_product_complexes}
\end{figure}

\end{example}

\begin{example}
\label{different_product_rectangle_example}
\normalfont
Here is an example where the Dowker product complex differs from the rectangle complex. Let $A= \{ a, b\},$ $X = \{x, y\}$, and let $R \subseteq A \times X$ be the relation
\[
R = \begin{blockarray}{ccc}
 & x & y  \\
\begin{block}{c[cc]}
a &  1& 1  \bigstrut[t] \\
b &  1 & 1  \bigstrut[b]\\
\end{block}
\end{blockarray}
. \]
Here, both $D_A$ and $D_X$ consist of a single $1$-simplex. Let $\tilde{R} \subseteq P_A \times P_X$ be the induced relation on the face posets of the Dowker complexes. The Dowker product complex is a CW complex with one 2-cell (Figure~\ref{fig:relational_product_complexes}B), while the rectangle complex is a simplicial complex with one 3-simplex (Figure~\ref{fig:relational_product_complexes}C).
\end{example}

\begin{example}
\label{example:relational_product}
\normalfont
Let $A = \{a, b\}$, $X = \{x, y, z \}$, and let $R \subseteq A \times X$ be the following relation. 
\[
R = \begin{blockarray}{cccc}
 & x & y & z \\
\begin{block}{c[ccc]}
a &  1& 1 & 0 \bigstrut[t] \\
b &  1 & 1 & 1 \bigstrut[b]\\
\end{block}
\end{blockarray}
. \]
Figure~\ref{fig:relational_product_example2} illustrates the Dowker product complex.

\begin{figure}[h!]
    \centering
    \includegraphics[scale=0.3]{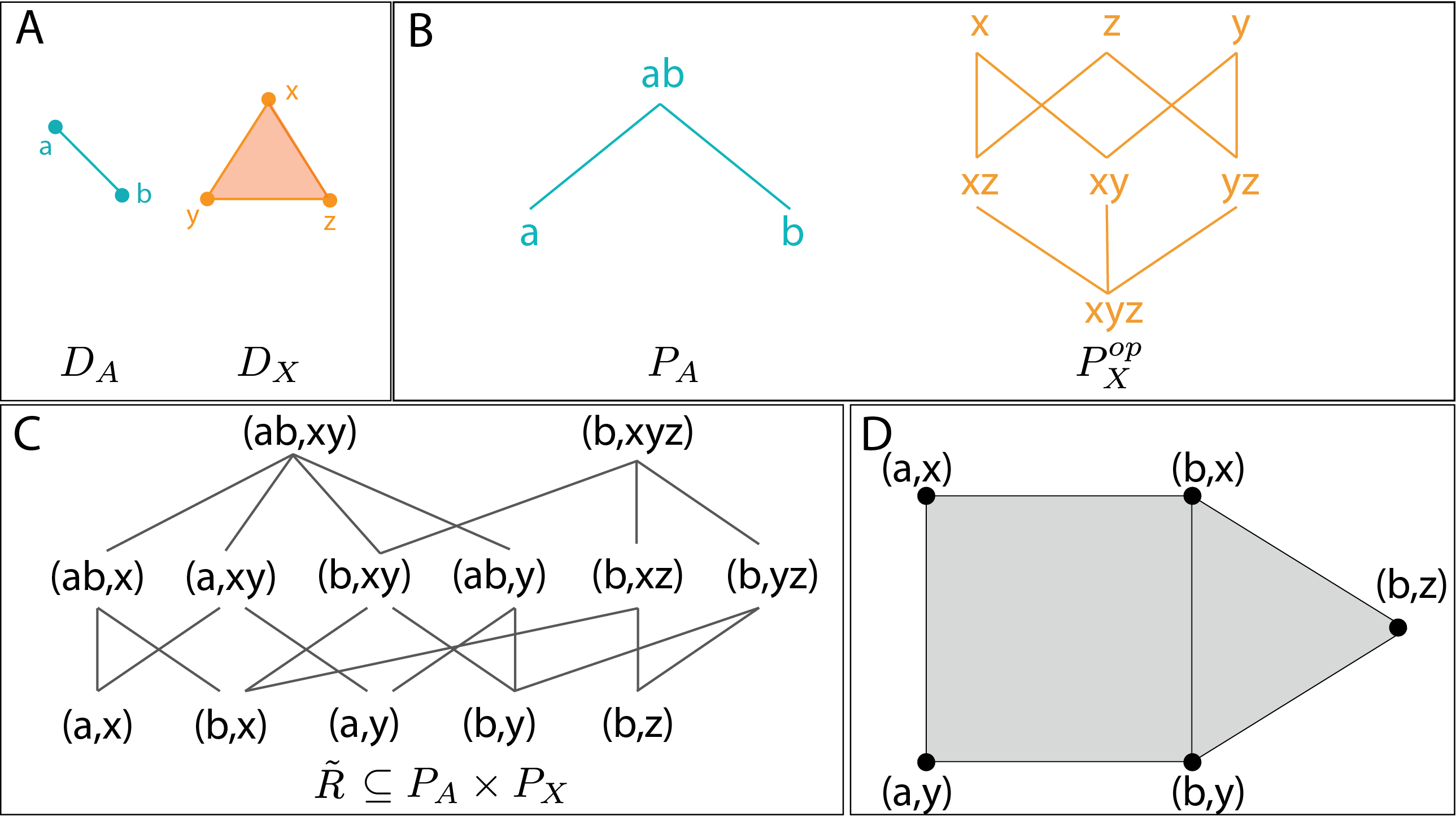}
    \caption{Example Dowker product complex. \textbf{A.} Dowker complexes for Example~\ref{example:relational_product}. \textbf{B.} Face posets of Dowker complexes. \textbf{C.} The poset $\tilde{R} \subseteq P_A \times P_X$. \textbf{D.} The Dowker product complex. }
    \label{fig:relational_product_example2}
\end{figure}

\end{example}

We now prove Dowker duality using relational products.

\begin{theorem}
[Dowker duality via relational products]
\label{thm:Dowker_relational_product}
Let $R \subseteq A \times X$ be a relation between sets, and 
let $\tilde{R} \subseteq P_A \times P_X$ be the induced relation between the face posets $P_A$, $P_X$ of Dowker complexes $D_A$ and $D_X$. 
Then, 
\[ \|D_A\|  \htpyeq  D_A \times_{\tilde{R}} D_X \htpyeq  \|D_X\| . \]  
\end{theorem}

\begin{proof} 

Considering $\tilde{R} \subseteq P_A \times P_X$ as a downward closed subset of $P_A \times P_X$, let $\pi_A: \tilde{R} \to P_A$ and $\pi_X: \tilde{R} \to P_X$ be the maps induced by the projection maps. We will show that
\[ P_A \xleftarrow{\pi_A} \tilde{R} \xrightarrow{\pi_X} P_X\]
induce homotopy equivalences
\begin{equation}
\label{eq:htpyeq_product_complex}
\| \Delta P_A \| \xleftarrow{\htpyeq} \| \Delta \tilde{R} \| \xrightarrow{\htpyeq} \|\Delta P_X\|. 
\end{equation}

Fix $\sigma_A \in P_A$ and consider 
\begin{align*}
\tilde{R}_{\sigma_A} &= \{\sigma_X \in P_X \, | \, (\sigma_A, \sigma_X) \in \tilde{R} \} \\
&= \{ \sigma_X \in P_X \, | \, (a, x) \in R \quad  \forall a\in \sigma_A, \, \forall x \in \sigma_X \}.
\end{align*}
Let $X^* \subseteq X$ be the subset $X^* = \{ x \in X \, | \, (a,x) \in R \quad \forall a \in \sigma_A \}$. Then, $\tilde{R}_{\sigma_A}$ is the collection of non-empty finite subsets of $X^*$, partially ordered by inclusion and is contractible.  By Quillen's Product Fiber Lemma (Lemma \ref{lemma:QuillenProductFiber}), the map $\pi_A$ induces a homotopy equivalence in the order complexes. A similar argument for $\pi_X$ establishes Equation~\ref{eq:htpyeq_product_complex}.

According to Lemma~\ref{lemma:R_faceposet_productcomplex}, $\tilde{R}$ is the face poset of the relational product complex $D_A \times_{\tilde{R}} D_X$.  
Since $P_A$ and $P_X$ are the face posets of the Dowker complexes, it follows from Equation~\ref{eq:htpyeq_product_complex} that 
\[ \|D_A\|  \xleftarrow{\htpyeq}  D_A \times_{\tilde{R}} D_X \xrightarrow{\htpyeq}  \| D_X\| . \]  
\end{proof}

\begin{remark}
\normalfont 
Given a relation $R \subseteq A \times X$, one can consider a directed graph $G$ with vertex set $A \cup X$ and oriented edges $(a, x)$ if $aRx$. In this perspective, the 
Dowker complexes $D_A$ and $D_X$ coincide with the in-neighborhood complex $\overleftarrow{N(G)}$ and the out-neighborhood complex $\overrightarrow{N(G)}$ in \cite{dochtermann_homomorphism_2023}. Furthermore, the relational product complex $D_A \times_{\tilde{R}} D_X$ coincides with $\overrightarrow{\text{Hom}}(K_2, G)$ in \cite{dochtermann_homomorphism_2023}, and Theorem 4.3 in \cite{dochtermann_homomorphism_2023} provides an alternative proof of Theorem~\ref{thm:Dowker_relational_product}. 
\end{remark}
We now prove the functorial Dowker duality via relational products.
\begin{theorem}[Functorial Dowker duality via relational product]
Let $R \subseteq A \times X$ and $R' \subseteq A' \times X'$ be relations between sets. Let $f: A \to A'$ and $g: X \to X'$ be set maps such that $(f(a), g(x)) \in R'$ for all $(a, x) \in R$. Let $\tilde{R} \subseteq P_A \times P_X$ and $\tilde{R'} \subseteq P_{A'} \times P_{X'}$ be the induced relation between the face posets of the  Dowker complexes.  Let $D_f: D_A \to D_{A'}$ and $D_g: D_X \to D_{X'}$ be the simplicial maps induced by the maps $f$ and $g$ on the vertex sets. Let $D_{f \times g}: D_A \times_{\tilde{R}} D_X \to D_{A'} \times_{\tilde{R'}} D_{X'}$ be the cellular map 
\[ 
D_{f \times g}( \|a_0 \dots a_n\| \times \|x_0 \dots x_m\|) = \|f(a_0) \dots f(a_n) \| \times \|g(x_0) \dots g(x_m) \|.\]
Then, the following diagram commutes up to homotopy.
\begin{equation}
\label{eq:diagram_functorial_products}
\begin{tikzcd}
\|D_A\|   \arrow[d,"\|D_f\|"]  & \arrow[l,"\htpyeq", swap] D_A \times_{\tilde{R}} D_X \arrow[d, "D_{f \times g}"]  \arrow[r, "\htpyeq"]  & \|D_X\| \arrow[d, "\|D_g\|"]  \\
\|D_{A'}\| &  \arrow[l,"\htpyeq", swap] D_{A'} \times_{\tilde{R'}} D_{X'} \arrow[r, "\htpyeq"]  &  \|D_{X'}\|
\end{tikzcd}
\end{equation}

\end{theorem}

\begin{proof}
Let us focus on the right square. Let $\tilde{f}: P_A \to P_{A'}$ and $\tilde{g}: P_X \to P_{X'}$ be the induced maps on the face posets of Dowker complexes defined by $\tilde{f}(a_0 \dots a_n) = f(a_0) \dots f(a_n)$ and $\tilde{g}(x_0 \dots x_m) = g(x_0) \dots g(x_m)$. Define $\tilde{f} \times \tilde{g}: \tilde{R} \to \tilde{R}'$ by 
\[\tilde{f} \times \tilde{g}((a_0 \dots a_n),(x_0 \dots x_m)) = ((f(a_0) \dots f(a_n)), (g(x_0) \dots g(x_m))).\]

We have the following commutative diagram of posets and spaces. 
\[ 
\begin{tikzcd}
\tilde{R} \arrow[r, "\pi"] \arrow[d, "\tilde{f} \times \tilde{g}"] 
& P_X \arrow[d, "\tilde{g}"] \\
\tilde{R'} \arrow[r, "\pi'"] & P_{X'}    
\end{tikzcd}
\hspace{2em}
\text{and}
\hspace{2em}
\begin{tikzcd}
\Vert \Delta \tilde{R} \Vert  \arrow[r, "\|\pi\|"] \arrow[d, " \| \tilde{f} \times \tilde{g} \|"]  
& \Vert \Delta P_X \arrow[d, "\| \tilde{g} \|"] \Vert \\
\Vert \Delta \tilde{R'} \Vert \arrow[r, "\|\pi'\|"] 
& \Vert \Delta P_{X'}  \Vert  
\end{tikzcd} 
\]
Append the diagram with homeomorphisms $\phi$ and $\phi'$ between CW complexes and realizations of their face posets (\cite{bjornerPosetsRegularCW1984}). We also append the diagram with homeomorphisms $\psi$ and $\psi'$ between the geometric realizations of Dowker complexes and their subdivisions. We obtain the following.
\[\begin{tikzcd}
D_A \times_{\tilde{R}} D_X   \arrow[d, "D_{f \times g}"]  & \arrow[l, "\phi"]   \Vert \Delta \tilde{R} \Vert  \arrow[r, "\|\pi\|", swap] \arrow[d, "\| \tilde{f} \times \tilde{g} \|"] 
& \Vert \Delta P_X \arrow[d, "\| \tilde{g} \|"] \Vert  \arrow[r, "\psi", swap] 
&   \Vert D_X \Vert \arrow[d, "\| D_g \|"] \\
D_{A'} \times_{\tilde{R'}} D_{X'}  
& \arrow[l, "\phi'"]  \Vert \Delta \tilde{R'} \Vert \arrow[r, "\|\pi'\|", swap]
& \Vert \Delta P_{X'} \Vert  \arrow[r, "\psi'", swap]
&  \Vert D_{X'} \Vert 
\end{tikzcd} \]

The center square commutes. Note that $\tilde{g}: P_X \to P_{X'}$, when considered as the simplicial map $\tilde{g}: \Delta P_X \to \Delta P_{X'}$, is precisely the map induced by $D_g: D_X \to D_{X'}$ on the order complex of its face posets. So the right square commutes up to homotopy by Lemma~\ref{lemma:functorial_subdivision}. The left square also commutes up to homotopy for a similar reason. Given a simplex $(\sigma_0, \tau_0) < \cdots < (\sigma_n, \tau_n)$ of $\Delta \tilde{R} $,  let
\[C((\sigma_0, \tau_0) < \cdots < (\sigma_n, \tau_n)) = D_{f \times g} ( \|\sigma_n\| \times \|\tau_n\|) = \|D_f(\sigma_n) \| \times \|D_g(\tau_n)\|,  \]
which is contractible, being a closed cell of a regular CW complex $D_{A'} \times_{\tilde{R}'} D_{X'}$. Note that $C$ carries both $D_{f \times g}\circ \phi$ and $\phi' \circ \| \tilde{f} \times \tilde{g}\| $. By the Carrier Lemma (Lemma~\ref{lemma:carrierlemma}), the left square commutes up to homotopy. All together, the right square of Diagram~\ref{eq:diagram_functorial_products} commutes up to homotopy. Similarly, the left square of Diagram~\ref{eq:diagram_functorial_products} also commutes up to homotopy. 
\end{proof}
The three new proofs of Dowker duality are summarized by the following diagrams of posets (left) and of Dowker complexes and relational complexes (right). 

\[
\begin{tikzcd}[column sep = small]
\quad & \tilde{R} \arrow[dl] \arrow[dr] & \quad \\
P_A \arrow{dr}  \arrow[rr, "L", yshift=-3, swap]  & \, & P_X^{\op} \arrow{dl}  \arrow[ll, "U", yshift = 3, swap] \\
\, & P_A \star_{\tilde{R}} P_X^{\op} & \, 
\end{tikzcd} 
\hspace{5em}
\begin{tikzcd}[column sep = small]
\quad &   D_A  \times_{\tilde{R}} D_X   \arrow[dl, swap, "\htpyeq"] \arrow{dr}{\htpyeq} & \quad \\ 
\|D_A\|  \arrow{dr}[swap]{\htpyeq}   \arrow[rr, "\htpyeq", yshift=-3, swap]  & \, &   \|D_X\|   \arrow{dl}{\htpyeq}  \arrow[ll, "\htpyeq", yshift = 3, swap] \\
\, &   \|D_A \star_{\tilde{R}} D_X \|  & \, 
\end{tikzcd} 
\]

\subsection{Proofs of Nerve Lemma via relational complexes}
\label{section:nerve_lemma_pf_via_relational_complexes}
Here, we present proofs of the Nerve Lemma via relational joins and relational products. 

\begin{theorem}[Nerve Lemma via relational join] Let $K$ be a simplicial complex, and let $\mathcal{U}$ be a good cover of $K$ via subcomplexes. Let $P_K$ denote the face poset of $K$, and let $P_{N\mathcal{U}}$ denote the face poset of the nerve $N\mathcal{U}$. Let $\tilde{R} \subseteq P_K \times P_{N\mathcal{U}} $ be the covering relation. The inclusion maps
\[ P_K \xrightarrow{i} P_K \star_{\tilde{R}} P^{\op}_{N\mathcal{U}} \xleftarrow{j} P^{\op}_{N\mathcal{U}} \]
induce homotopy equivalences
\begin{equation}
\label{eq:nerve_relationaljoin}
\| \Delta P_K \| \xrightarrow{\htpyeq} \|\Delta (P_K \star_{\tilde{R}} P^{\op}_{N\mathcal{U}})\| \xleftarrow{\htpyeq} \|\Delta P^{\op}_{N \mathcal{U}}\|.
\end{equation}
\end{theorem}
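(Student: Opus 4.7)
The plan is to follow the strategy of Theorem~\ref{thm:Dowker_relational_join} and verify the hypotheses of Quillen's poset fiber lemma (Lemma~\ref{lemma:Quillen_fiber}) for each of the inclusions $i$ and $j$. Since $\Delta P_K$ is the barycentric subdivision of $K$ and $\Delta P_{N\mathcal{U}}^{\op}$ is the barycentric subdivision of $N\mathcal{U}$, homotopy equivalences on the order complexes immediately yield the nerve lemma $K \htpyeq N\mathcal{U}$ as a corollary.

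For $i \from P_K \to P_K \star_{R^*} P_{N\mathcal{U}}^{\op}$, I would analyze the fiber $i^{-1}_{\leq s}$ case by case. When $s = \sigma \in P_K$, the fiber has maximum element $\sigma$ and is contractible. When $s = \tau \in P_{N\mathcal{U}}^{\op}$, the fiber $i^{-1}_{\leq \tau} = \{\sigma \in P_K \,|\, \sigma R^* \tau\}$ is precisely the set of simplices of $K$ contained in the subcomplex $\bigcap_{U \in \tau} U$. This subcomplex is contractible by the good cover hypothesis, so its face poset has a contractible order complex. Quillen's lemma then gives the desired homotopy equivalence on order complexes.

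For $j \from P_{N\mathcal{U}}^{\op} \to P_K \star_{R^*} P_{N\mathcal{U}}^{\op}$, I would dually compute $j^{-1}_{\geq s}$. When $s = \tau \in P_{N\mathcal{U}}^{\op}$, the fiber has minimum element $\tau$ (in the $\op$ order) and is contractible. When $s = \sigma \in P_K$, the fiber $j^{-1}_{\geq \sigma} = \{\tau \in P_{N\mathcal{U}}^{\op} \,|\, \sigma R^* \tau\}$ consists exactly of the nonempty subsets $\tau$ of $\mathcal{U}_\sigma := \{U \in \mathcal{U} \,|\, \sigma \subseteq U\}$. Any such $\tau$ is automatically a simplex of $N\mathcal{U}$ because $\bigcap_{U \in \tau} U$ contains $\sigma$ and is thus nonempty. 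Hence the fiber is the face poset of the full simplex on the vertex set $\mathcal{U}_\sigma$, and in the $\op$ order it has $\mathcal{U}_\sigma$ itself as a minimum, so it is contractible and Quillen's lemma applies.

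The point to be careful about -- and the reason the argument cannot be packaged as a direct invocation of Lemma~\ref{lemma:Galois_htpyeq} as in Theorem~\ref{thm:dowker_Galois} -- is exactly the obstruction highlighted in the remark preceding this section: there is no Galois connection between $P_K$ and $P_{N\mathcal{U}}^{\op}$ in general. Consequently the contractibility of the two families of fibers comes from two genuinely different sources, namely the topological good cover hypothesis on the $i$-side and the combinatorial observation that any family of cover elements sharing a common simplex automatically spans a simplex of the nerve on the $j$-side. Once these verifications are in place, the rest of the argument is a direct transcription of the scheme used for Theorem~\ref{thm:Dowker_relational_join}.
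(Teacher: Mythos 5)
Your proposal is correct and follows essentially the same route as the paper: both arguments verify contractibility of the fibers $i^{-1}_{\leq s}$ and $j^{-1}_{\geq s}$ case by case (maximal/minimal elements when $s$ lies on the same side as the domain; the good-cover hypothesis for $i^{-1}_{\leq \tau}$; and the full simplex on $\mathcal{U}_\sigma$ with its minimum in the opposite order for $j^{-1}_{\geq \sigma}$) and then invoke Quillen's poset fiber lemma. Your closing observation about why the Galois-connection shortcut is unavailable matches the remark the paper makes just before this theorem.
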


\begin{proof}
Consider $i: P_K \to P_K \star_{\tilde{R}} P^{\op}_{N\mathcal{U}}$ and fibers of the form $i^{-1}_{\leq \sigma}$.
If $\sigma_K \in P_K$, considered as an element of $P_K \star_{\tilde{R}} P^{\op}_{N\mathcal{U}}$,
then $i^{-1}_{\leq \sigma_K}$ has $\sigma_K$ as a maximum element. So $i^{-1}_{\leq \sigma_K}$ is contractible.  If $\sigma_{\mathcal{U}} \in P^{\op}_{N\mathcal{U}}$, then $i^{-1}_{\leq \sigma_{\mathcal{U}}}$ consists of all $\sigma_K \in P_K$ that are covered by elements corresponding to $\sigma_{\mathcal{U}}$. Then, $\Delta i^{-1}_{\leq \sigma_{\mathcal{U}}}$ is a barycentric subdivision of the subcomplex covered by $\sigma_{\mathcal{U}}$, which is contractible by assumption.

Now, consider $j:P^{\op}_{N\mathcal{U}} \to P_K \star_{\tilde{R}} P^{\op}_{N\mathcal{U}}$ and fibers of the form $j^{-1}_{\geq \sigma}$. If $\sigma_{\mathcal{U}} \in P^{\op}_{N\mathcal{U}}$, then $j^{-1}_{\geq \sigma_{\mathcal{U}}}$ has a minimum element, namely $\sigma_{\mathcal{U}}$. 
If $\sigma_K \in P_K$, then 
$$j^{-1}_{\geq \sigma_K} = \{ \sigma_{\mathcal{U}} \in P^{\op}_{N\mathcal{U}} \, | \, \sigma_{\mathcal{U}} \text{ covers } \sigma_K \}.$$ 
Let $\mathcal{U}_{\sigma_K} \subseteq \mathcal{U}$ be the subset $\mathcal{U}_{\sigma_K} = \{ U \in \mathcal{U} \, | \, U \text{ covers } \sigma_K \}.$ Then, $j^{-1}_{\geq \sigma_K}$ is the collection of nonempty finite subsets of $\mathcal{U}_{\sigma_K}$ and is therefore contractible.

By Quillen's Poset Fiber Lemma (Lemma~\ref{lemma:Quillen_fiber}), $i$ and $j$ induce homotopy equivalences in the order complexes, establishing Equation~\ref{eq:nerve_relationaljoin}. Since $\Delta P_K$ is the barycentric subdivision of $K$ and $\Delta P_{N\mathcal{U}}$ is the barycentric subdivision of $N\mathcal{U}$, the Nerve Lemma follows. 
\end{proof}

\begin{theorem}[Nerve Lemma via relational products] Let $K$ be a simplicial complex, and let $\mathcal{U}$ be a good cover of $K$ via subcomplexes. Let $P_K$ denote the face poset of $K$, and let $P_{N\mathcal{U}}$ denote the face poset of the nerve $N\mathcal{U}$. Let $\tilde{R} \subseteq P_K \times P_{N\mathcal{U}} $ be the covering relation, considered as a poset.
The projection maps
\[ P_K \leftarrow \tilde{R} \rightarrow P_{N\mathcal{U}} \]
induce homotopy equivalences
\[ \|\Delta P_K \| \xleftarrow{\htpyeq}  \| \Delta \tilde{R}\| \xrightarrow{\htpyeq} \| \Delta P_{N \mathcal{U}} \|.\] 
\end{theorem}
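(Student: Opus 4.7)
The plan is to apply Quillen's product fiber lemma (Lemma~\ref{lemma:QuillenProductFiber}) to each of the two projections in turn. First I would verify that the covering relation $R^*$, defined by $(\sigma,\tau) \in R^*$ iff $\sigma \subseteq U$ for every $U \in \tau$, is a downward closed subset of $P_K \times P_{N\mathcal{U}}$, so that it qualifies as a closed subset in the sense required by the lemma. This is immediate: passing to a face of $\sigma$ keeps it inside each $U$, and passing to a face of $\tau$ in $N\mathcal{U}$ only shrinks the set of cover elements in which $\sigma$ must lie.

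For $\pi_{N\mathcal{U}} : R^* \to P_{N\mathcal{U}}$, I would fix a simplex $\tau = \{U_{i_1}, \ldots, U_{i_k}\}$ of the nerve and identify the fiber $Z_\tau = \{\sigma \in P_K \mid (\sigma,\tau) \in R^*\}$ with the face poset of the subcomplex $U_{i_1} \cap \cdots \cap U_{i_k}$ of $K$. By the good cover hypothesis, this subcomplex is contractible, hence so is its order complex (which is its barycentric subdivision). Lemma~\ref{lemma:QuillenProductFiber} then yields the homotopy equivalence $\Delta(R^*) \xrightarrow{\htpyeq} \Delta P_{N\mathcal{U}}$.

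For $\pi_K : R^* \to P_K$, I would fix a simplex $\sigma$ of $K$ and argue that the fiber $Z_\sigma = \{\tau \in P_{N\mathcal{U}} \mid (\sigma,\tau) \in R^*\}$ has a maximum element, namely $\mathcal{U}_\sigma = \{U \in \mathcal{U} \mid \sigma \subseteq U\}$. This collection is nonempty since $\mathcal{U}$ covers $K$, and it is a genuine simplex of $N\mathcal{U}$ because its intersection contains $\sigma$. A poset with a top element has a contractible order complex, so Lemma~\ref{lemma:QuillenProductFiber} applies a second time to give $\Delta(R^*) \xrightarrow{\htpyeq} \Delta P_K$.

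The main step—and the only place the good cover hypothesis is actually used—is the identification of $Z_\tau$ with the face poset of $\bigcap_{U \in \tau} U$. One could alternatively mirror Theorem~\ref{thm:Dowker_relational_product} by constructing explicit poset endomorphisms of each fiber and invoking the order homotopy lemma (Lemma~\ref{lemma:order_htpy_thm}), but since the fibers here already have natural contractible structure (a genuine contractible subcomplex in one direction, a poset with a maximum in the other), the product fiber lemma gives the cleanest route.
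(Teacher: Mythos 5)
Your proposal is correct and follows essentially the same route as the paper: both apply Quillen's product fiber lemma to the two projections, using the maximum element $\{U \in \mathcal{U} \mid \sigma \subseteq U\}$ to contract the fibers of $\pi_K$ and the good-cover hypothesis (the fiber over $\tau$ being the face poset of $\bigcap_{U \in \tau} U$) to contract the fibers of $\pi_{N\mathcal{U}}$. Your version is in fact slightly more careful than the paper's, since you explicitly verify that $R^*$ is a closed subset of $P_K \times P_{N\mathcal{U}}$, a hypothesis of the lemma that the paper leaves implicit.
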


\begin{proof}
Let $\sigma \in P_K$. Then $\tilde{R}_{\sigma} = \{\tau \in P_{N \mathcal{U}} \, | \sigma \tilde{R} \tau \}$ consists of nonempty finite subsets of $\mathcal{U}_{\sigma} =\{ U \in \mathcal{U} \, | \, U \text{ covers } \sigma \}$, and is therefore contractible.

Let $\tau \in P_{N\mathcal{U}}$. Then, $\tilde{R}_{\tau} = \{\sigma \in P_K \, | \, \sigma \tilde{R} \tau \}$ is contractible by assumption. The theorem follows from Quillen's Product Fiber Lemma (Lemma~\ref{lemma:QuillenProductFiber}).
\end{proof}
Note that this Quillen-style proof of the Nerve Lemma appears in \cite{bjornerHomotopyTypePosets1981} (Lemma 1.1).

\section{The relational complexes and long exact sequences}
\label{section:SpectralSequences}

So far, given a relation $\tilde{R} \subseteq P_K \times P_M$ between face posets of simplicial complexes,
we constructed the relational join complex $K \star_{\tilde{R}} M$ and the relational product complex $K \times_{\tilde{R}} M$. When the relations arise in the context of Dowker duality or good covers, the relational join complex and the relational product complex are all homotopy equivalent to $K$ and $M$. Here, we consider general relations $\tilde{R}$ and show that the relational join complex and the product complex are related via a double mapping cylinder and that the homologies of all these complexes fit together in a long exact sequence. 

\begin{theorem}
\label{thm:double_mapping_cylinder}
Let $\tilde{R} \subseteq P_K \times P_M$ be a relation between the face posets of simplicial complexes $K$ and $M$. Let $K \star_{\tilde{R}} M$ and $K \times_{\tilde{R}} M$ each denote the relational join complex and the relational product complex. Given the projection map $\|K \| \times \|M \| \to \|K\|$, let $p_K: K \times_{\tilde{R}} M \to \| K \|$ be its restriction to the subcomplex $K \times_{\tilde{R}} M$, and let $p_M:  K \times_{\tilde{R}} M \to \| M \|$ be defined similarly. Then, the relational join $ \| K \star_{\tilde{R}}M \|$ is homeomorphic to the double mapping cylinder of $\|K\| \xleftarrow{p_K} K \times_{\tilde{R}} M \xrightarrow{p_M} \|M\|$. 
\end{theorem} 

See Figure~\ref{fig:double_mapping_cylinder} for an illustration. Note that the join $\| K \star M \|$ is the double mapping cylinder of $\|K\| \leftarrow \| K\| \times \|M\| \to \|M\|$, where the maps are projection maps. We obtain the relational join complex by restricting the diagram to $\|K\| \xleftarrow{p_K} K \times_{\tilde{R}} M \xrightarrow{p_M} \|M\|$.  

\begin{figure}[h]
    \centering
    \includegraphics[width=0.35\linewidth]{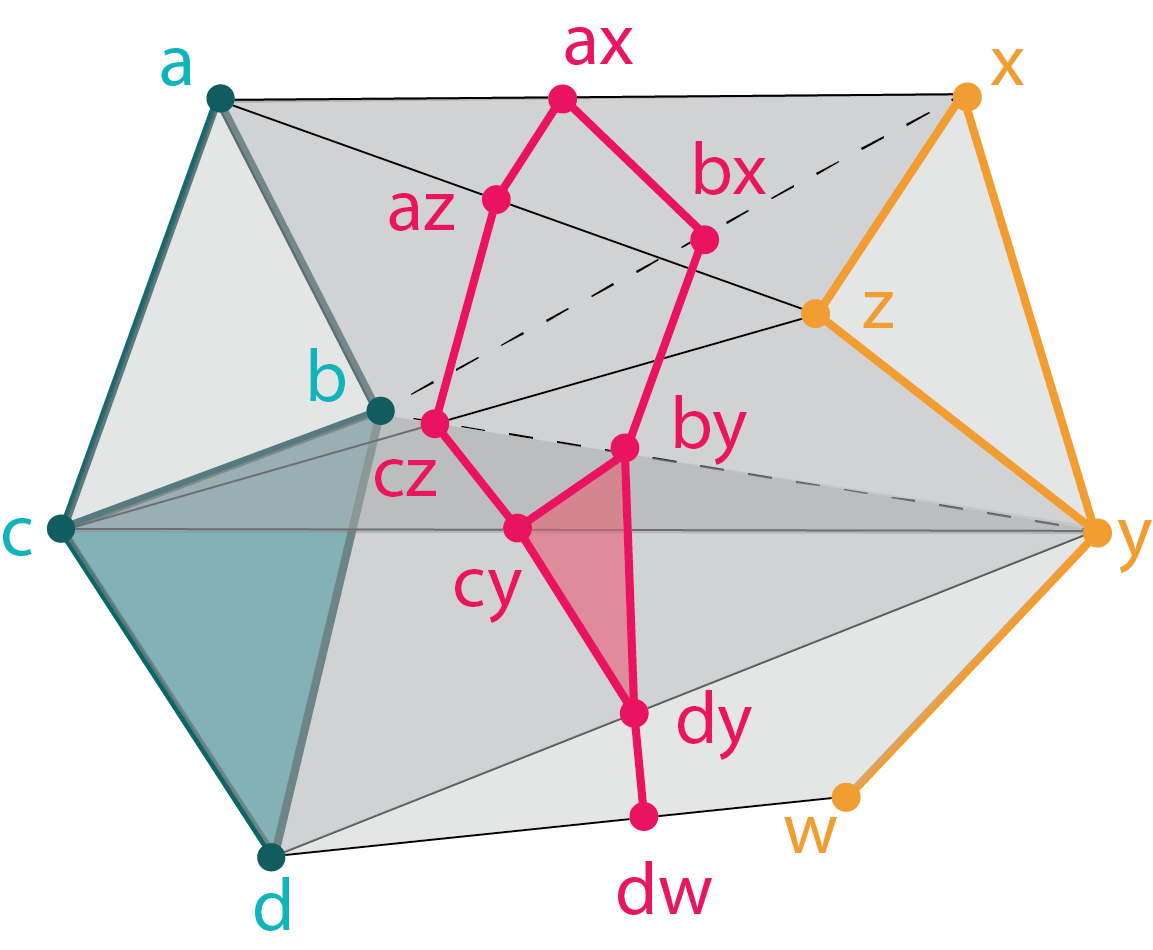}
    \caption{The relational complexes for Example~\ref{DowkerJoinExample2}. The relational join complex $\|K \star_{\tilde{R}} M\|$ contains the relational product complex $K \times_{\tilde{R}} M$, shown in red.  }
    \label{fig:double_mapping_cylinder}
\end{figure}

\begin{proof}
Note that the geometric realization $\| K \star_{\tilde{R}}M \|$ consists of $\|\sigma\|$, $\|\tau\|$, and $\|\sigma \cup \tau\|$ for simplices $\sigma \in K$ and $\tau \in M$ where $\sigma \tilde{R} \tau$. In particular, each $\| \sigma \cup \tau\|$ is the union of line segments between points of $\|\sigma\|$ and $\| \tau \|$.

Since $K \times_{\tilde{R}}M$ is a subcomplex of $\|K\| \times \|M\|$, we use $(k,m)$ to refer to points in $K \times_{\tilde{R}}M$. The double mapping cylinder of $\|K\| \xleftarrow{p_K} K \times_{\tilde{R}} M \xrightarrow{p_M} \|M \|$ 
is the quotient space 
\begin{equation}
\label{eq:quotient_space}
\Big((K \times_{\tilde{R}} M) \times I \Big)  \sqcup \|K \| \sqcup \| M\|  \, / \, \sim \,,
\end{equation}
where $I$ is the unit interval and the equivalence relation is generated by $(k, m, 0) \sim p_K(k,m) = k$ and $(k, m, 1) \sim p_M(k,m) = m$. Let $q$ be the quotient map  
\[ q: \Big( (K \times_{\tilde{R}} M) \times I \Big) \sqcup \|K \| \sqcup \| M\|  \to \Big( (K \times_{\tilde{R}} M) \times I \Big)  \sqcup \|K \| \sqcup \| M\|  \, / \, \sim \, .\]

We will show that the relational join $\| K \star_{\tilde{R}} M\|$ is homeomorphic to the above quotient space. Define 
\[g:\Big((K \times_{\tilde{R}} M) \times I \Big)  \sqcup \|K \| \sqcup \| M\| \to \| K \star_{\tilde{R}} M\|\]
by 
\[
 g|_{ (K \times_{\tilde{R}} M) \times I}(k, m, t) = (1-t)\,k + t\, m,
  \qquad
  g|_{\|K\|} = \iota_K,
  \qquad
  g|_{\|M\|} = \iota_M,
\]
where $\iota_K$ and $\iota_M$ are inclusion maps of $\|K\|$ and $\|M\|$ into $\|K \star_{\tilde{R}} M \|$.
We will show that $g$ is also a quotient map that makes the same identifications as $q$.

The maps $g|_{\|K\|}$ and $g|_{\|M\|}$ are continuous. For each closed cell $\|\sigma\| \times \|\tau\|$ of $(K \times_{\tilde{R}} M)$, $g$ continuously maps $\|\sigma\| \times \|\tau\| \times I$ onto $\| \sigma \cup \tau\|$ 
and includes into $\| K \star_{\tilde{R}}M\|$. Thus, $g|_{(\|\sigma\| \times \|\tau\|) \times I}$ is continuous, so $g|_{(K \times_{\tilde{R}} M) \times I}$ is continuous (\cite{lundellTopologyCWComplexes1969} Ch. II Corollary 5.4). So $g$ is continuous and surjective. 

We now show that $g$ takes saturated closed sets to closed sets. Recall that $K \times_{\tilde{R}}M$ and $\| K \star_{\tilde{R}} M \|$ both have the weak topology with respect to their respective closed cells. Let $C \subseteq \|K \star_{\tilde{R}}M \|$ so that $g^{-1}(C)$ is closed. So $g^{-1}(C) \cap \Big( (K \times_{\tilde{R}} M) \times I \Big)$ is closed in $(K \times_{\tilde{R}} M) \times I$, and its intersection with $\|\sigma \|\times \|\tau\| \times I$ is closed 
and therefore compact. 
$g$ maps $g^{-1}(C) \cap \Big((K \times_{\tilde{R}} M) \times I  \Big) \cap (\|\sigma\| \times \|\tau\| \times I)$ to $C  \cap \|\sigma \cup \tau \|$, so $C  \cap \| \sigma \cup \tau \|$ is compact. Since $C \cap \| \sigma \cup \tau\|$ is a compact subset of $\| K \star_{\tilde{R}} M \|$, which is Hausdorff,  $C \cap \| \sigma \cup \tau\|$ is closed. A similar argument shows that $C \cap \| \sigma \|$ and $C \cap \|\tau\|$ are closed for every simplex $\sigma$ of $K$ and $\tau$ of $M$. So $C$ is closed in $\| K \star_{\tilde{R}} M\|$. 

Since $g$ is a continuous, surjective map that takes saturated closed sets to closed sets, $g$ is a quotient map. 

Note that $q$ and $g$ make the same identifications. 
 By the uniqueness of quotient spaces, there exists a homeomorphism 
 \[ \Big((K \times_{\tilde{R}} M) \times I \Big) \sqcup \|K \| \sqcup \| M\|  \, / \, \sim \,\to \| K \star_{\tilde{R}} M\|.\]
\end{proof}

Applying the Mayer-Vietoris sequence to the double mapping cylinder (\cite{Hatcher2002} Example 2.48), we obtain the following long exact sequence. All homologies are computed with coefficients in some fixed abelian group $G$. 

\begin{corollary}
\label{cor:LES}
Let $\tilde{R} \subseteq P_K \times P_M$ be a relation between the face posets of simplicial complexes. 
Let $K \star_{\tilde{R}} M$ and $K \times_{\tilde{R}} M$ each denote the relational join complex and the relational product complex. There exists a long exact sequence
 \[  \cdots \to H_n(K \times_{\tilde{R}} M) \to H_n(K) \oplus H_n(M) \to H_n(K\star_{\tilde{R}}M) \to H_{n-1}(K \times_{\tilde{R}} M) \to \cdots \quad .\]
\end{corollary}

\subsection{Applications: covers of simplicial complexes}
We can now understand the relations between a simplicial complex and the nerve of its covering, even when the cover isn't a good cover. Let $K$ be a simplicial complex, and let $\mathcal{U}$ be any covering of $K$ with subcomplexes. Let $N\mathcal{U}$ be the nerve of the cover, 
and let $\tilde{R} \subseteq P_K \times P_{N\mathcal{U}}$ be the covering relation.

Since $\mathcal{U}$ isn't necessarily a good cover, $K$ is not necessarily homotopy equivalent to $N\mathcal{U}$. However, the homologies of $K$ and $N\mathcal{U}$ fit into a long exact sequence according to Corollary~\ref{cor:LES}
\[ \cdots \to  H_n(K \times_{\tilde{R}}N\mathcal{U}) \to H_n(K) \oplus H_n(N\mathcal{U}) \to H_n(K\star_{\tilde{R}} N\mathcal{U}) \to H_{n-1}(K \times_{\tilde{R}} N\mathcal{U}) \to \cdots \; . \]

\begin{example}
\normalfont
Let $K$ be the simplicial complex in Figure~\ref{fig:nongood_nerve}A. Let $\mathcal{U} =\{ x, y\}$ be a covering of $K$ (Figure~\ref{fig:nongood_nerve}A). 
Let $\tilde{R} \subseteq P_K \times P_{N\mathcal{U}}$ be the covering relation.
The relational join complex and the product complex are illustrated in Figure~\ref{fig:nongood_nerve}B. One can check that their homologies fit into the above long exact sequence. 
\begin{figure}[h!]
    \centering
    \includegraphics[width = 0.8\linewidth]{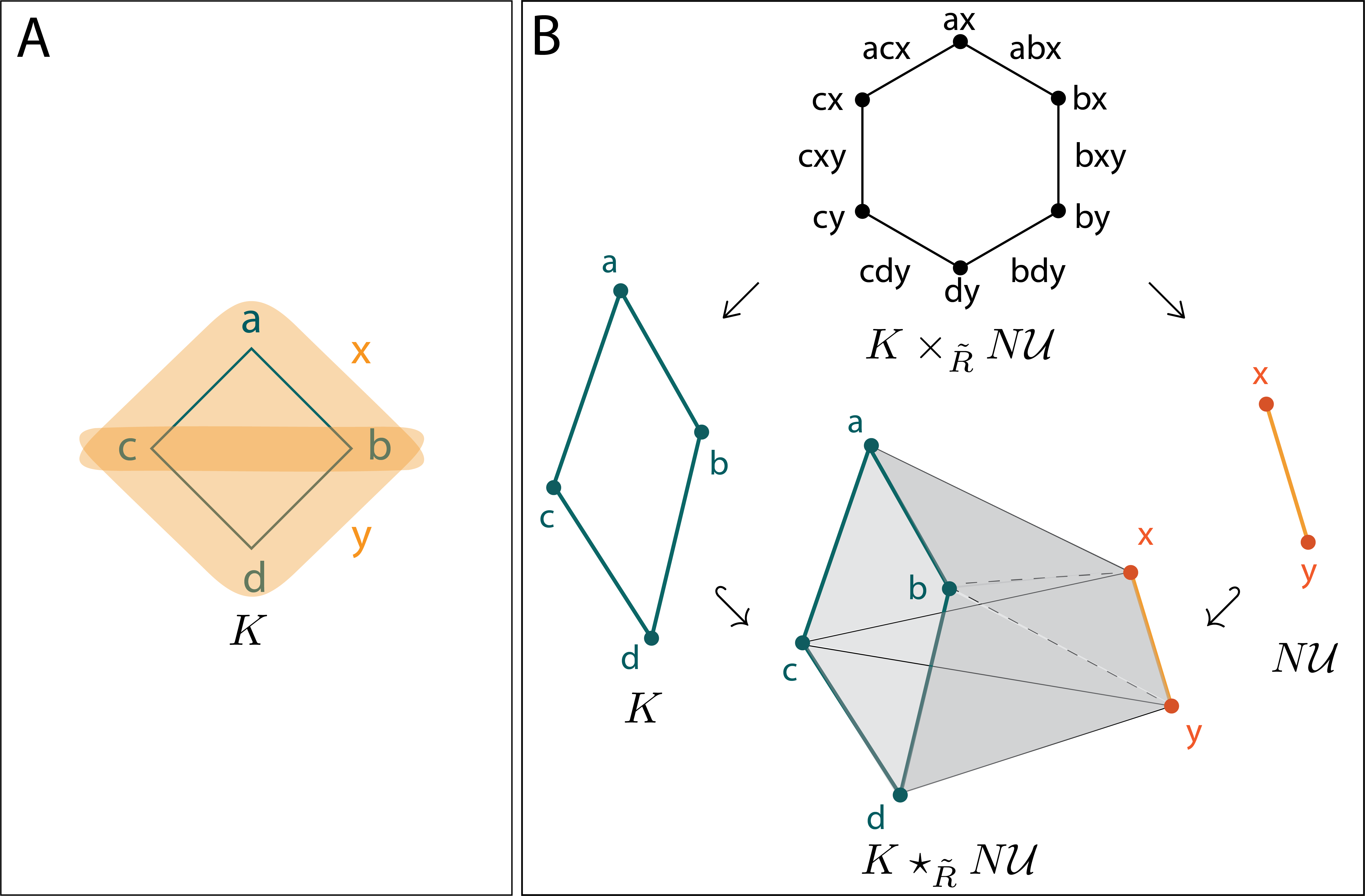}
    \caption{Example cover of a simplicial complex. \textbf{A.} A simplicial complex $K$ and a covering $\mathcal{U}$ that is not a good cover. \textbf{B.} Simplicial complex $K$ (left), the nerve $N\mathcal{U}$ (right), the relational product complex $K \times_{\tilde{R}} N\mathcal{U}$ (top), and the relational join complex $K \star_{\tilde{R}} N\mathcal{U}$ (bottom).}
    \label{fig:nongood_nerve}
\end{figure}

\end{example}

\noindent\textbf{Acknowledgments}
The author thanks Vin de Silva for constructive discussions.

\newpage
\printbibliography

\end{document}